\newif\if@restonecol
\def\R{\mathbb{R}}
\def\E{\mathbb{E}}
\def\P{\mathbb{P}}
\def\TN{\mathcal{TN}}
\newtheorem{theorem}{Theorem}
\newtheorem{definition}{Definition}
\newcommand{\argmin}{\mathop{\mathrm{argmin}}}
\newcommand{\st}{\mathop{\mathrm{s.t.}}}
\newcommand{\ie}{i.e.}
\newcommand{\parm}{{\xi}}
\newcommand{\vecpar}{\boldsymbol{\parm}}
\newcommand{\polyInd}{\alpha}
\newcommand{\basisInd}{\boldsymbol{\polyInd}}
\newcommand{\mat}[1]{\mathbf{#1}}
\def\BibTeX{{\rm B\kern-.05em{\sc i\kern-.025em b}\kern-.08em
    T\kern-.1667em\lower.7ex\hbox{E}\kern-.125emX}}
\begin{document}
\title{PoBO: A Polynomial Bounding Method for Chance-Constrained Yield-Aware Optimization of Photonic ICs}

\author{\IEEEauthorblockN{Zichang He and Zheng Zhang, \textit{Member, IEEE}}
\thanks{This work was partly supported by NSF grants \#1763699 and \#1846476. 

Zichang He and Zheng Zhang are with Department of Electrical and Computer Engineering, University of California, Santa Barbara, CA 93106, USA (e-mails: zichanghe@ucsb.edu;  zhengzhang@ece.ucsb.edu).
}
}


\maketitle
\begin{abstract}
Conventional yield optimization algorithms try to maximize the success rate of a circuit under process variations. These methods often obtain a high yield but reach a design performance that is far from the optimal value. This paper investigates an alternative yield-aware optimization for photonic ICs: we will optimize the circuit design performance while ensuring a high yield requirement. 
This problem was recently formulated as a chance-constrained optimization, and the chance constraint was converted to a stronger constraint with statistical moments.  Such a conversion reduces the feasible set and sometimes leads to an over-conservative design. To address this fundamental challenge, this paper proposes a carefully designed polynomial function, called optimal polynomial kinship function, to bound the chance constraint more accurately.
We modify existing kinship functions via relaxing the independence and convexity requirements, which fits our more general uncertainty modeling and tightens the bounding functions.
The proposed method enables a global optimum search for the design variables via polynomial optimization. 
We validate this method with a synthetic function and two photonic IC design benchmarks, showing that our method can obtain better design performance while meeting a pre-specified yield requirement. Many other advanced problems of yield-aware optimization and more general safety-critical design/control can be solved based on this work in the future.

\end{abstract}
\begin{IEEEkeywords}
Photonic design automation; yield-aware optimization; chance constrained programming; uncertainty quantification; process variation.
\end{IEEEkeywords}

\section{Introduction}
\label{introduction}
The increasing process variations have resulted in significant performance degradation and yield loss in semiconductor chip design and fabrications~\cite{gielen2008emerging,chen2013process}.
Compared with electronic ICs, photonic ICs are more sensitive to
process variations (e.g., geometric uncertainties) due to their large device dimensions compared with the small wavelength. Therefore, yield modeling and optimization for photonic ICs are highly desired~\cite{lipka2016systematic,bogaerts2019layout}.

The yield optimization and yield-aware robust design have been investigated in the electronic design automation community for a long time and have been paid increasing attention in the photonic design automation~\cite{antreich1994circuit,gong2014variability,lu2017performance,pond2017predicting}.
Typical yield-aware design techniques include geometric approaches~\cite{xu2009regular}, geostatistics-motivated performance modeling~\cite{yu2007yield}, yield-aware Pareto surface~\cite{tiwary2006generation}, yield-driven iterative robust design~\cite{li2009yield}, computational intelligence assisted approaches~\cite{liu2011efficient}, corner-based method~\cite{barros2010analog}, Bayesian yield optimization~\cite{wang2017efficient} and so forth.
Yield estimation is the key component in a yield optimizer. 
Advanced yield estimators can be generally classified as Monte-Carlo-based~\cite{singhee2008practical,papoulis2001probability} and non-Monte-Carlo-based~\cite{gu2008efficient,gong2012fast,9428031} methods. Among the non-Monte-Carlo ones, surrogate modeling aims to approximate some circuit behaviors under variations to speed up the sampling and simulation process~\cite{shi2019meta,yao2014efficient,li2006asymptotic}. 
Typical surrogate models include posynomial models~\cite{li2004robust}, linear quadratic models~\cite{li2008quadratic}, support vector machine~\cite{ciccazzo2015svm,ma2020support}, Gaussian process~\cite{sanabria2020gaussian,wang2017yield}, sparse polynomial~\cite{wang2014enabling}, generalized polynomial chaos expansions~\cite{xiu2003modeling,zhang2013stochastic}, and some variants~\cite{trinchero2020combining}. 
Focusing on generalized polynomial chaos expansion, advanced techniques have also been developed to handle high-dimensional~\cite{li2010finding,zhang2014enabling,zhang2016big,He2020EPEPS,he2021high}, mixed-integer~\cite{He2019ICCAD} or non-Gaussian correlated~\cite{cui2018stochastic} process variations.
The polynomial-based modeling and optimization has been widely used in both electronics~\cite{manfredi2014stochastic,ahadi2016sparse,kaintura2018review,wang2016re, tao2018graph} and photonic IC design~\cite{waqas2021performance, waqas2018stochastic, weng2015uncertainty, weng2017stochastic}.

While most existing yield optimization approaches try to maximize the yield of a circuit, the obtained design performance (e.g., signal gain, power dissipation) may be far from the achievable optimal solution. Recently, an alternative approach was proposed in~\cite{cui2020chance} to achieve both excellent yield and design performance. Instead of simply maximizing the yield, the work~\cite{cui2020chance} optimizes a design performance metric while enforcing a high yield requirement. Specifically, the yield requirement is formulated as some chance constraints~\cite{shapiro2014lectures}, which are further transformed to tractable constraints of the first and second statistical moments. The chance-constrained modeling itself has been widely used in many engineering fields~\cite{mesbah2014stochastic,vitus2015stochastic,wang2017chance}.
The moment bounding method offers a provably sufficient condition of the chance constraint. However, the bounding gap may be too large in many applications~\cite{van2016generalized}.
The resulting overly-reduced feasible region may lead to an over-conservative design. 

\textbf{Paper contributions.}
This paper proposes a novel \textbf{Po}lynomial \textbf{B}ounding method for chance-constrained yield-aware \textbf{O}ptimization (PoBO) under truncated non-Gaussian correlated variations. Leveraging the recent uncertainty quantification techniques~\cite{cui2018stochastic,cui2020chance}, PoBO employs and modifies the idea of kinship functions~\cite{feng2010kinship} to approximate the original chance constraints with a better polynomial bounding method. PoBO provides a less conservative design than moment-based bounding methods~\cite{cui2020chance} while ensuring a pre-specified yield requirement. 
The specific contributions of this paper include:
\begin{itemize}[leftmargin=*]
    \item A better bounding method of the chance constraints via optimal polynomial kinship functions. Compared with existing work in the control community~\cite{feng2010kinship}, we avoid the assumption of the independence among random variables and the convexity of kinship functions. The relaxation allows more general non-Gaussian correlated uncertainty modeling and tightens the bounding functions.
    Within a family of polynomial functions, the optimal polynomial kinship functions can be efficiently solved via semidefinite programming.
    Our bounding method preserves the polynomial formulation of the provided surrogate models. It enables the advanced polynomial optimization solvers, which provide a sequence of convex relaxations via semidefinite optimization and searches for the global design. 
    \item Numerical implementation of the PoBO framework. Based on available uncertainty quantification solvers, we implement PoBO efficiently based on some pre-calculated optimal polynomial kinship functions and quadrature samples and weights without requiring any additional circuit simulations.
    \item Validations on a synthetic function and two photonic IC design examples. Our method offers better design performance while meeting the pre-specified yield requirements. This method requires a small number of circuit simulations due to its compatibility with recent data-efficient uncertainty quantification methods~\cite{zhang2016big,cui2018stochastic}.
\end{itemize}

While this work focuses on the fundamental theory, algorithms, and their validation on small-size photonic circuits, the proposed method can be combined with sparse or low-rank surrogate modeling methods~\cite{li2010finding,zhang2016big} to handle large-scale design cases with much more design variables and process variations.


\section{Background}
\label{sec:preliminaries}
This section reviews  chance-constrained yield-aware optimization and its implementation via moment bounding~\cite{cui2020chance}.
\subsection{Chance-Constrained Yield-Aware Optimization}
We denote the design variables by $\mat{x} = [x_1, x_2, \ldots, x_{d_1}]^T \in \mat{X}$, and the process variations by random parameters $\vecpar = [\xi_1, \xi_2, \ldots, \xi_{d_2}]^T \in \mat{\Xi}$.
Let $\{y_i (\mat{x},\vecpar)\}_{i=1}^n$ denote $n$ performance metrics that are considered in yield estimation, $\{u_i\}_{i=1}^n$ denote their corresponding upper bounds specifying the design requirements. An indicator function is defined as 
\begin{equation}\label{eq:indicator}
    I(\mat{x},\vecpar)=
    \begin{cases}
    1, & y_i(\mat{x},\vecpar) \le u_i, \forall i=[n]; \\
    0, & \text{otherwise}.
    \end{cases}
\end{equation}
Here $[n]=\{ 1,2,\cdots, n\}$. The yield at a certain design choice $\mat{x}$ is defined as
\begin{equation}\label{eq:yield}
    Y(\mat{x}) = \P_{\vecpar} (\mat{y}(\mat{x},\vecpar)\le \mat{u}) = \E_{\vecpar} [I(\mat{x},\vecpar)].
\end{equation}
In conventional yield optimization, one often tries to achieve the best possible yield. This often requires losing remarkable design performance $f(\mat{x}, \vecpar)$ in order to achieve a high yield. 

Simply maximizing the yield may lead to an {\bf over-conservative} design. As an example shown in Fig.~\ref{fig:yield_obj}, one may lose lots of performance (from $2.2$ to $1.4$) while just getting marginal yield improvement from $99\%$ to $100\%$.    
\begin{figure}[t]
    \centering
    \includegraphics[width = 3.3in]{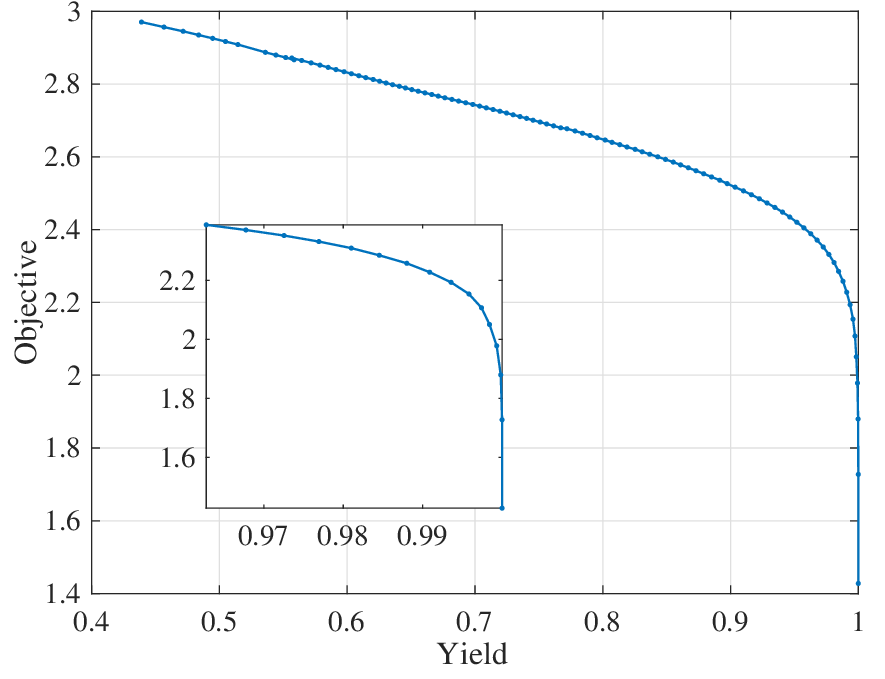}
    \caption{The trade-off between yield and an objective design performance for the example from~\ref{sec:synthetic_example}.
    Simply maximizing the yield can lead to over-conservative performance.}
    \label{fig:yield_obj}
\end{figure}
In order to avoid an over-conservative design, a chance-constrained optimization was proposed in~\cite{cui2020chance}: 
\begin{subequations}\label{eq:CC}
\begin{align}
        \max_{\mat{x} \in \mat{X}} \quad& \E_{\vecpar} [f(\mat{x},\vecpar)]\\
        \st \quad & \P_{\vecpar}(y_i(\mat{x,} \vecpar) \le u_i) \ge 1-\epsilon_i, \forall i=[n]. \label{eq:CC_constraint}
\end{align}
\end{subequations}
where $f(\mat{x}, \vecpar)$ is the performance metric that we aim to optimize, and $\epsilon_i \in [0,1]$ is a risk level to control the probability of meeting each design constraint. Instead of simply maximizing the yield, the chance-constrained optimization tries to achieve a good balance between yield and performance: one can optimize the performance $f(\mat{x}, \vecpar)$ while ensuring a high yield. The circuit yield can be controlled by $\epsilon_i$: reducing $\epsilon_i$ ensures a lower failure rate and thus a higher yield. 

The chance-constrained optimization~\eqref{eq:CC} is generally hard to solve. Firstly, the feasible set produced by the chance constraints is often non-convex and hard to estimate. Secondly, it is also expensive to estimate the design objective function $f(\mat{x}, \vecpar)$ and design constraint function $y_i(\mat{x}, \vecpar)$ due to the lack of analytical expressions. Fortunately, a moment bounding method was combined with uncertainty quantification techniques in~\cite{cui2020chance} to make the problem tractable.

\subsection{Moment Bounding Method for~\eqref{eq:CC}}
Employing the Chebyshev-Cantelli inequality, one can ensure the chance constraint via a moment bounding technique~\cite{calafiore2006distributionally,cui2020chance}. Specifically, with the first and second-order statistical moments of the constraint function, one can convert the probabilistic constraint in~\eqref{eq:CC} to a deterministic one:
\begin{equation}\label{eq:CCineq}
    \begin{aligned}
        \max_{\mat{x} \in \mat{X}} \quad& \E_{\vecpar} [{f}(\mat{x},\vecpar)]\\
        \st \quad & \E_{\vecpar}[{y}_i(\mat{x}, \vecpar)] + \gamma_{\epsilon_i}\sqrt{\text{Var}_{\vecpar} [{y}_i(\mat{x},\vecpar)]} \le u_i, \forall i \in [n],
    \end{aligned}
\end{equation}
where constant $\gamma_{\epsilon_i} = \sqrt{\frac{1-\epsilon_i}{\epsilon_i}}$. When the objective and constraint functions are described by certain surrogate models such as generalized polynomial chaos~\cite{zhang2013stochastic,cui2018stochastic}, one can easily extract their mean and variances. This can greatly simplify the problem and reduce the computational cost, as shown by the yield-aware optimization of photonic IC in~\cite{cui2020chance}.

The moment constraint in~\eqref{eq:CCineq} is a sufficient but unnecessary condition of the original chance constraint in~\eqref{eq:CC}. Therefore, any feasible solution of~\eqref{eq:CCineq} should satisfy the probability constraint of~\eqref{eq:CC}.   However, the feasible set produced by a moment bounding can be much smaller than the exact one~\cite{van2016generalized}. This usually leads to an over-conservative design solution. When the risk level is very small, the moment bounding method may even produce an empty feasible set, leading to an unsolvable problem (see Section~\ref{sec:numerical_results}).

\begin{figure}[t]
    \centering
    \includegraphics[width = 3.3in]{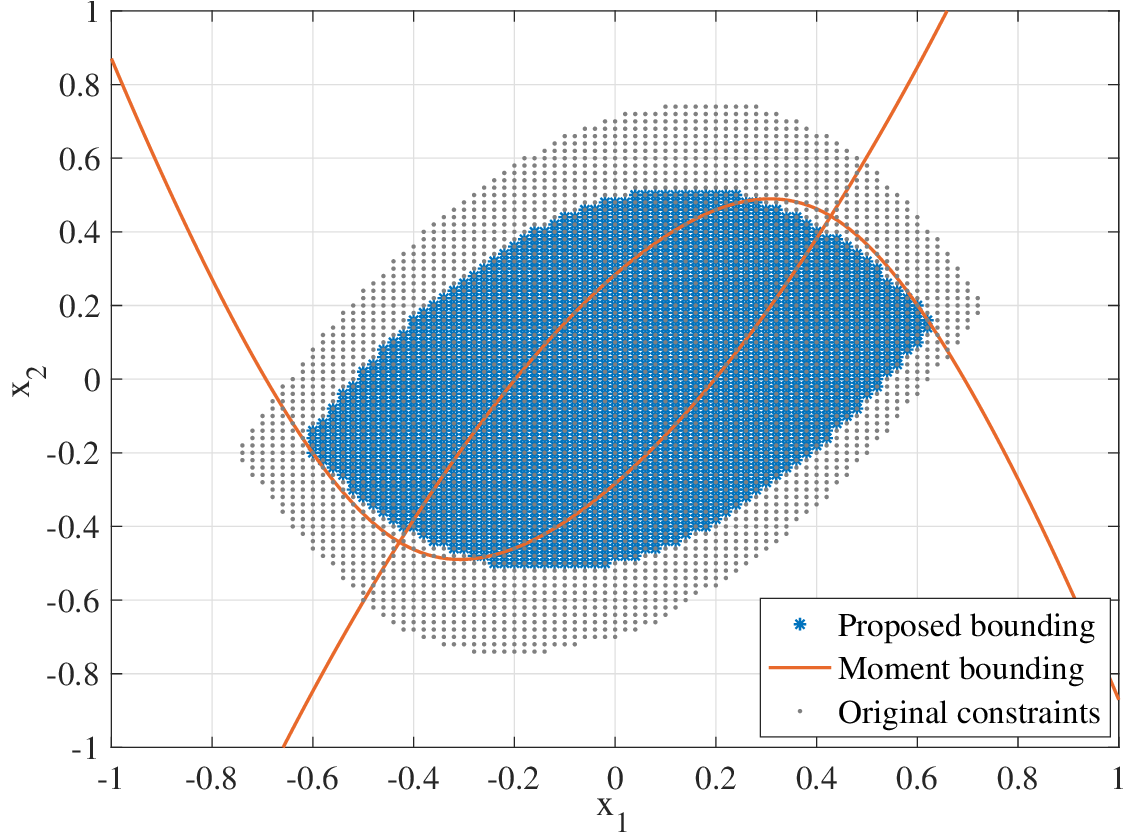}
    \caption{The feasible set of an original chance constraint, moment bounding (the cross region of two orange lines), and the proposed polynomial bounding.} 
    \label{fig:single_feasible}
\end{figure}

\section{Proposed Polynomial Bounding Method}
\label{sec:proposed_method}
To avoid the possible over-conservative bounding of the moment methods~\cite{cui2020chance}, we propose to bound the chance constraint via a more accurate polynomial method. Fig.~\ref{fig:single_feasible} plots the feasible regions obtained by different bounding methods for the synthetic example in Sec.~\ref{sec:synthetic_example}. 
For the same chance constraints, the moment bounding method produces a feasible set that is much smaller than the exact one, whereas our polynomial bounding method generates a better approximation of the feasible set. Due to the more accurate approximation of the feasible set, our proposed polynomial bounding method can provide a less conservative design in yield-aware optimization. Now we describe how to generate the polynomial bounds via kinship functions. 

\subsection{Kinship Function}
The kinship function was first proposed to construct a convex approximation of an indicator function in~\cite{feng2010kinship}.
We generalize the concepts of~\cite{feng2010kinship} with two relaxations: 
\begin{itemize}
    \item We do not require the convexity of a kinship function.
    \item We do not require the random variables $\vecpar$ to be mutually independent. Instead, we consider the more challenging cases where random parameters are truncated non-Gaussian correlated. 
\end{itemize}

We slightly modify the definition of a kinship function.

\begin{definition}[\textbf{Kinship function}]\label{def:kinship}
A kinship function $\kappa (z): [-1, \infty) \rightarrow \R$ is a function that satisfies the following constraints:
\begin{itemize}
    \item $\kappa(z) = 1$ when $z=0$;
    \item $\kappa(z)\geq 0$ for any $z \in [-1, +\infty) $;
    \item $\kappa(z_1) \geq \kappa (z_2)$ for any $z_1 \geq z_2$ in the range $[-1,\infty)$.
\end{itemize} 
\end{definition}

Based on kinship functions, the following theorem offers an upper bound for any probability of constraint violations.
\begin{theorem}[\textbf{Risk integral}~\cite{feng2010kinship}]\label{theorem:property}
Let $\kappa(\cdot)$ be a kinship function, $g(\mat{x},\vecpar)\geq -1$, $\mu(\vecpar)$ be the density function of random vector $\vecpar$, and $V_{\kappa}(\mat{x})$ be a risk integral quantity:
\begin{equation}\label{eq:risk_integral}
    V_{\kappa}(\mat{x}) \coloneqq \int \limits_{\mat{\Xi}} \kappa[g(\mat{x},\vecpar)]\mu(\vecpar) d\vecpar,
\end{equation}
then we have $\P\{\vecpar \in \mat{\Xi}: g(\mat{x},\vecpar)>0\}\le V_{\kappa}(\mat{x})$.
\end{theorem}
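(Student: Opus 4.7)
The plan is to reduce the probability on the left-hand side to the integral of an indicator function, show that this indicator is bounded above pointwise by $\kappa(g(\mat{x},\vecpar))$, and then integrate against $\mu(\vecpar)$. The three defining properties of a kinship function in Definition~\ref{def:kinship} are exactly what one needs to carry out this pointwise domination argument.

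First I would rewrite the probability as an expectation. Let $I(\vecpar) = 1$ if $g(\mat{x},\vecpar) > 0$ and $I(\vecpar) = 0$ otherwise; then
\begin{equation*}
\P\{\vecpar \in \mat{\Xi}: g(\mat{x},\vecpar) > 0\} = \int_{\mat{\Xi}} I(\vecpar)\, \mu(\vecpar)\, d\vecpar.
\end{equation*}
Next I would establish the pointwise inequality $I(\vecpar) \le \kappa[g(\mat{x},\vecpar)]$ for every $\vecpar \in \mat{\Xi}$. The hypothesis $g(\mat{x},\vecpar) \ge -1$ ensures that $g(\mat{x},\vecpar)$ lies in the domain $[-1,\infty)$ of $\kappa$, so the right-hand side is well defined. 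I would then split into two cases: if $g(\mat{x},\vecpar) \le 0$, then $I(\vecpar) = 0$ and the inequality follows from the nonnegativity property $\kappa(z) \ge 0$; if $g(\mat{x},\vecpar) > 0$, then $I(\vecpar) = 1$ and the monotonicity property combined with $\kappa(0) = 1$ gives $\kappa[g(\mat{x},\vecpar)] \ge \kappa(0) = 1$.

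Finally, integrating both sides of the pointwise inequality against the joint density $\mu(\vecpar)$ over $\mat{\Xi}$ yields
\begin{equation*}
\int_{\mat{\Xi}} I(\vecpar)\, \mu(\vecpar)\, d\vecpar \;\le\; \int_{\mat{\Xi}} \kappa[g(\mat{x},\vecpar)]\, \mu(\vecpar)\, d\vecpar \;=\; V_{\kappa}(\mat{x}),
\end{equation*}
which is the desired bound. Note that this argument nowhere uses either convexity of $\kappa$ or independence of the components of $\vecpar$; $\mu(\vecpar)$ may be any valid joint density, which aligns with the relaxations announced before the definition and explains why the authors' generalization to truncated non-Gaussian correlated variations is legitimate.

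The argument is essentially a pointwise domination estimate, so there is no substantial technical obstacle. The only subtle point worth being careful about is ensuring that $g(\mat{x},\vecpar) \ge -1$ almost surely so that the integrand $\kappa[g(\mat{x},\vecpar)]$ is defined everywhere the measure puts mass; this is guaranteed by the hypothesis of the theorem. A secondary point to mention is strictness of the event $\{g > 0\}$ versus $\{g \ge 0\}$: the event $\{g = 0\}$ contributes $\kappa(0) = 1$ to the upper bound but $0$ to the probability, so the bound is valid (if slightly loose on this set of measure-theoretic boundary). No integrability issue arises because $\kappa$ is nonnegative, so the integral defining $V_{\kappa}(\mat{x})$ is always well defined in $[0,\infty]$.
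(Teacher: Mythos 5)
Your proof is correct and is essentially the same argument as the paper's: both use nonnegativity of $\kappa$ to discard (or dominate) the region where $g\le 0$ and monotonicity together with $\kappa(0)=1$ to get $\kappa\ge 1$ on the event $\{g>0\}$, differing only in that you build the bound upward from the indicator while the paper peels the integral downward from $V_{\kappa}(\mat{x})$. No gaps.
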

\begin{proof}
According to Definition~\ref{def:kinship}, $\kappa[g(\mat{x},\vecpar)]$ is nonnegative in $[-1,\infty)$ and greater than 1 if $g(\mat{x},\vecpar) \ge 0$. Therefore, for any probability measure $\mu(\vecpar)$ on $\mat{\Xi}$, we have:
\begin{subequations}
\small
\begin{align}
    V_{\kappa}(\mat{x}) &\ge \int \limits_{\{\vecpar \in \mat{\Xi}, g(\mat{x},\vecpar)>0\}} \kappa[g(\mat{x},\vecpar)]\mu(\vecpar) d\vecpar \label{eq:risk_integral_ineq} \\
    & \ge \int \limits_{\{\vecpar \in \mat{\Xi}: g(\mat{x},\vecpar)>0\}} \mu(\vecpar) d\vecpar = \P\{\vecpar \in \mat{\Xi}, g(\mat{x},\vecpar)>0\}.
\end{align} \normalsize
\end{subequations}
\end{proof}
There exist many possible choices of kinship functions. Next, we will show how to pick some polynomial kinship functions. We consider the polynomial function family because it is compatible with existing surrogate modeling techniques~\cite{zhang2013stochastic,zhang2016big,li2010finding} to facilitate yield-aware optimization.
\begin{figure}[t]
    \centering
    \includegraphics[width = 3.3in]{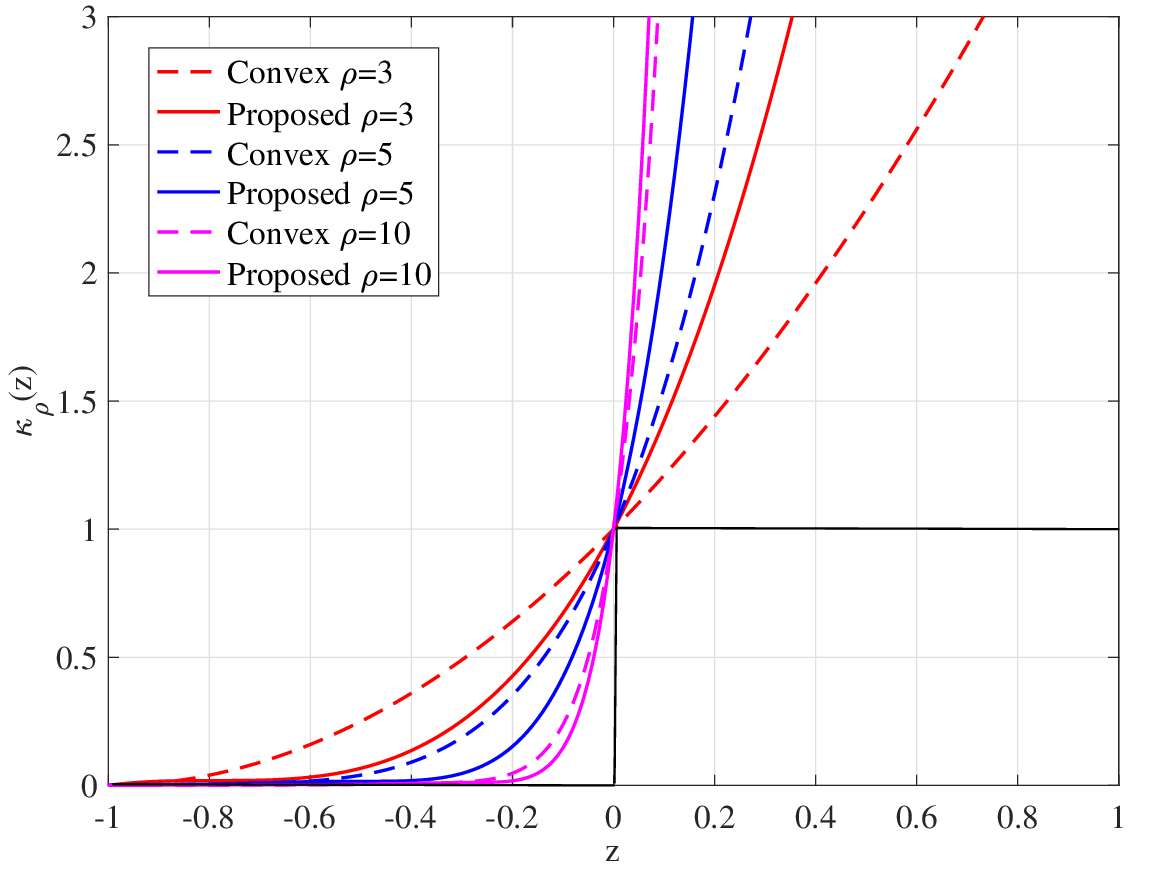}
    \caption{Different kinship functions. Black curve: an indicator function.}
    \label{fig:kinship}
\end{figure}
\subsection{Optimal Polynomial Kinship Function}
An optimal kinship function is defined as a kinship function that minimizes its integral over $[-1,0]$~\cite{feng2010kinship}:
\begin{equation}
    \kappa^\star(\cdot) \coloneqq \argmin \limits_{\kappa(\cdot)\in {\cal K}} \int_{-1}^0 \kappa(z)dz.
\end{equation}
Here ${\cal K}$ is the set of all possible kinship functions that satisfy the constraints in Definition~\ref{def:kinship}. Let $z=g(\mat{x},\vecpar)$, then the above definition can be understood as minimizing the gap between the left- and right-hand sides of~\eqref{eq:risk_integral_ineq}. 

Now we consider choosing a kinship function from a family of order-$\rho$ polynomials ${\cal K}_{\rho} \coloneqq \{\kappa (z) | \kappa(z) = \sum_{i=0}^\rho \zeta_{i} z^{i} \}$. The optimal polynomial kinship function, denoted as $\kappa_{\rho}(\cdot)$, can be constructed by solving the following optimization problem:
\begin{equation}\label{eq:poly_kinship}
\begin{array}{cc}
 \min \limits_{\zeta_0,\ldots,\zeta_\rho} \quad & \int \limits_{-1}^0 \kappa_{\rho}(z)dz \\
    \st \quad & \kappa_{\rho}(z) = \zeta_0 + \zeta_1 z + \cdots + \zeta_\rho z^\rho \in {\cal K}_\rho, \\
    & \kappa_{\rho}(0) = 1, \; \kappa_{\rho}(-1) = 0,\\
    & \kappa_{\rho}^\prime(z) \ge 0, \; \forall z \ge -1.
\end{array}
\end{equation}
The optimization problem can be reformulated as semidefinite programming and the details are provided in Appendix~\ref{sec:optimal_poly}. The obtained optimal polynomial kinship function $\kappa_{\rho}(\cdot)$ with different orders are visualized in Fig.~\ref{fig:kinship}. Since we relax the convexity requirement, given the same polynomial order, the proposed polynomial function is a tighter approximation to the indicator function than a convex one~\cite{feng2010kinship} for $z \in [-1, 0]$.

Based on the obtained optimal polynomial kinship function $\kappa_{\rho}(\cdot)$, we bound the original chance constraint~\eqref{eq:CC_constraint} by enforcing the upper bound of the failure probability below $\epsilon_i$:
\begin{equation}
    \P_{\vecpar}(y_i(\mat{x,} \vecpar) > u_i) \leq\\
    \int \limits_{\mat{\Xi}} \kappa_{\rho}(y_i(\mat{x},\vecpar)-u_i) \mu(\vecpar) d\vecpar \le \epsilon_i.
\end{equation}
The order of $\kappa_{\rho}(\cdot)$ controls the upper bound of violation probability.
When $\rho$ is small, the bounding gap in~\eqref{eq:risk_integral_ineq} is large.
As $\rho \rightarrow \infty$, the polynomial bounding leads to a worst-case robust design optimization according to Theorem 3 in~\cite{feng2010kinship}. 
This is equivalent to setting a risk level $\epsilon_i = 0$, which leads to an extremely over-conservative design. 
Fortunately, this is not a trouble in practice since we do not use a very high-order polynomial due to the computational issues.
In practice, there exists an optimal order $\rho^\star$ for bounding the violation probability most accurately. The optimal $\rho^\star$ is unknown {\it a-priori}, but heuristically we find that setting $\rho \in [5,10]$ usually offers an excellent bound.

The proposed bounding method can be extended to deal with joint chance constraints by constructing a multivariate polynomial kinship function. We plan to report the results in a future paper.


\section{The PoBO Framework}
\label{sec:implementation}
Based on the proposed polynomial bounding for chance constraints, we further present the novel PoBO method to achieve less conservative yield-aware optimization. 

\subsection{Workflow of PoBO}
Our PoBO framework has two weak assumptions on the design and random variables: 
\begin{itemize}
    \item The design variable $\mat{x}$ is box-bounded, \ie, ~$\mat{x} \in \mat{X} = [a,b]^{d_1}$. This is normally the case in circuit optimization.
    \item The process variations $\vecpar$ are truncated and non-Gaussian correlated with a joint probability density function $\mu(\vecpar)$. 
\end{itemize}
The 2nd assumption is not strong at all. Many practical process variations are correlated and not guaranteed to be Gaussian. Additionally, the values of almost all practical geometric or material parameters are bounded, although some simplified unbounded distributions (e.g., Gaussian distributions) were used in previous literature for ease of implementation.

The overall flow of PoBO is summarized below.
\begin{itemize}[leftmargin=*]
\item \textbf{Step 1:} Surrogate modeling. 
We use the recent uncertainty quantification solver~\cite{cui2018stochastic} to construct polynomial surrogate models for the objective and constraint functions, \ie, $f (\mat{x},\vecpar) \approx \hat{f}(\mat{x},\vecpar)$ \text{and}     $y_i(\mat{x},\vecpar) \approx \hat{y}_i(\mat{x},\vecpar), \forall i=[n]$.

\item \textbf{Step 2:} Bounding the chance constraints via the proposed optimal polynomial kinship functions. 
This transforms a chance-constrained probabilistic optimization problem into a tractable deterministic one with a high-quality solution. 

\item \textbf{Step 3:} Design optimization. 
We use a polynomial optimization solver, e.g., the semidefinite programming relaxation~\cite{henrion2009gloptipoly}, to obtain a globally optimal solution. 
\end{itemize}

The PoBO framework reformulates the original chance-constrained optimization~\eqref{eq:CC} to the following optimization: 
\begin{subequations}\label{eq:CC_poly_kinship}
\small
\begin{align}
        \max_{\mat{x} \in \mat{X}} \quad& \E_{\vecpar} [\hat{f}(\mat{x},\vecpar)] \\
        \st \quad & V_{\kappa}^{(i)}(\mat{x}) =  \int \limits_{\mat{\Xi}} \kappa_{\rho}(\upsilon_i(\mat{x},\vecpar)) \mu(\vecpar) d\vecpar \le \epsilon_i, \; \forall i \in [n]. 
        \label{eq:integral_in_cc}
\end{align} \normalsize
\end{subequations}
Here $\upsilon_i(\mat{x},\vecpar) \coloneqq \hat{y}_i(\mat{x},\vecpar)-u_i$, $\hat{f} (\mat{x}, \vecpar)$ and $\hat{y}_i (\mat{x}, \vecpar)$ are the polynomial surrogate models of $f(\mat{x}, \vecpar)$ and $y_i (\mat{x}, \vecpar)$, respectively.

In the following subsections, we will describe the implementation details of this PoBO framework.

\subsection{Building Surrogate Models}
\label{sec:build_surrogate}
High-quality performance models are important to speed up design optimization.
We employ the advanced stochastic collocation method with non-Gaussian correlated uncertainty~\cite{cui2018stochastic,cui2020chance} in Step 1. 
This method approximates a smooth stochastic function as the linear combination  of some orthogonal and normalized polynomial basis functions:
\begin{equation}\label{eq:gPC_approx}
f(\mat{x},\vecpar) \approx \hat{f}(\mat{x},\vecpar) = 
\sum_{\lvert \basisInd \rvert + \lvert \boldsymbol{\beta} \rvert = 0}^p c_{\basisInd,\boldsymbol{\beta}} \mat{\Phi}_{\basisInd} (\mat{x}) \mat{\Psi}_{\boldsymbol{\beta}} (\vecpar). 
\end{equation}
Here $\basisInd$ and $\boldsymbol{\beta}$ are two index vectors, $\mat{\Phi}_{\basisInd}(\mat{x})$ and $\mat{\Psi}_{\boldsymbol{\beta}}(\vecpar)$ are two series of orthogonal polynomial basis functions, and $p$ upper bounds the total order of the product of two basis functions.
The corresponding coefficients $c_{\basisInd,\boldsymbol{\beta}}$ are calculated via a projection method using some optimization-based quadrature samples and weights of $\mat{x}$ and $\vecpar$~\cite{cui2018stochastic}. When the parameter dimensionality is not high, this method only needs a small number of simulation samples to produce a highly accurate surrogate model with a provable error bound. 
When the number of dimensions becomes high, we could utilize many existing advanced uncertainty quantification techniques to model the performance more efficiently~\cite{he2021high,cui2019high}. 
It is also possible to extend the proposed chance-constrained yield-aware method to other types of performance models.
\subsection{Scaling the Yield Metrics ${\upsilon}_i(\mat{x},\vecpar)$} 
\label{sec:lower_bound}
To bound the failure probability $\P\{\vecpar \in \mat{\Xi}: \upsilon_i(\mat{x},\vecpar)>0\}$ via the optimal kinship function, $\upsilon_i(\mat{x},\vecpar)$ must be in the range $[-1,\infty)$ according to Definition~\ref{def:kinship}. Once $\upsilon_i(\mat{x},\vecpar)$ is lower bounded, we can always scale it to meet this requirement. 

Since $\upsilon_i(\mat{x},\vecpar)$ is a polynomial function in our problem setting, and both $\mat{x}$ and $\vecpar$ are assumed bounded, we can compute the minimum value ${\upsilon_i^-} \coloneqq {\min \limits_{\mat{x} \in \mat{X}, \vecpar \in \mat{\Xi}} \upsilon_i(\mat{x},\vecpar) }$. 
Then we change the lower bound of $\upsilon_i(\mat{x},\vecpar)$ to -1 as follows:
\begin{equation}\label{eq:lower_bound}
\begin{matrix}
\upsilon_i(\mat{x},\vecpar) \longleftarrow  -\frac{1}{\upsilon_i^-} \upsilon_i(\mat{x},\vecpar), \; \forall i \in [n]. \end{matrix}
\end{equation}
The scaling factor $-\frac{1}{\upsilon_i^-}$ is positive as long as the problem \eqref {eq:CC_poly_kinship} is solvable. This is because the existence of $\mat{x} \in \mat{X}$ and $ \vecpar \in \mat{\Xi}$ such that  $\upsilon_i(\mat{x},\vecpar) < 0$ is the necessary condition to satisfy the yield constraint. We can easily avoid $\upsilon_i^- = 0$ by adding a sufficiently small perturbation.
\subsection{Calculating Risk Integral $V_{\kappa}^{(i)}(\mat{x})$}
\begin{figure}[t]
    \centering
    \includegraphics[width = 3.3in]{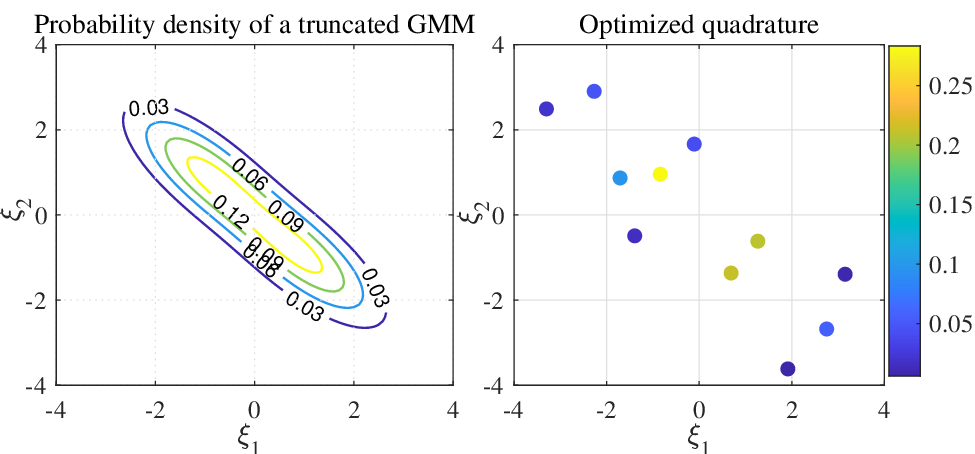}
    \caption{An example of optimized quadrature rule for a two-dimensional truncated Gaussian mixture model (GMM) with two components. 
    The color bar of the right figure represents the weights of all samples.
    The shown quadrature rule satisfies the exact integration up to order 6.}
    \label{fig:quadrature}
\end{figure}
In order to upper bound the probability of violating a design constraint, we need to calculate the integration in~\eqref{eq:integral_in_cc}. 
This can be a challenging task for a truncated non-Gaussian correlated random vector $\vecpar$ since classical numerical quadrature rules~\cite{golub1969calculation,gerstner1998numerical} do not work for non-Gaussian correlated variables.
Fortunately, we can reuse the quadrature rule of $\vecpar$ as the by-product of building the surrogate models in Sec.~\ref{sec:build_surrogate}. 
Specifically, the quadrature points and weights $\{\vecpar_l, w_l\}_{l=1}^{M}$ compute the exact integration up to order $2q$, obtained by solving the following optimization problem~\cite{cui2018stochastic}:
\begin{equation}\label{eq:quadrature_xi}
\min_{\vecpar_l,w_l \ge 0} \; \sum_{\lvert\boldsymbol{\beta}\rvert=0}^{2q} {\left(\E_{\vecpar}\left[\mat{\Psi}_{\boldsymbol{\beta}}(\vecpar)\right] - \sum_{l=1}^{M} \mat{\Psi}_{\boldsymbol{\beta}}(\vecpar_l) w_l \right)}^2.
\end{equation}
An example of the solved quadrature is shown in Fig.~\ref{fig:quadrature}. The number of quadrature samples could be controlled by tuning the optimization precision.
The detailed accuracy analysis and the bound of $M$ are provided in~\cite{cui2018stochastic}, which is omitted here. 
\begin{algorithm}[t]
  \caption{Flow of the proposed PoBO.} 
  \label{alg:flow}
  \begin{algorithmic}[1]
    \Require Box-bounded design variable $\mat{x} \in \mat{X}$, truncated non-Gaussian correlated variations $\vecpar \in \mat{\Xi}$, risk levels $\mat{\epsilon}$
    \Ensure
      Optimized design $\mat{x}^\star$
    \State Formulate the chance-constrained problem~\eqref{eq:CC}.
    \State Obtain surrogate models for the design objective function $f(\mat{x},\vecpar) \approx \hat{f}(\mat{x},\vecpar)  $ and constraint functions $ y_i(\mat{x},\vecpar)  \approx \hat{y}_i(\mat{x},\vecpar), \forall i \in [n]$.
    \State Scale the yield metrics ${\upsilon_i}(\mat{x},\vecpar)$ via Eq.~\eqref{eq:lower_bound}.  
    \State Compute the optimal polynomial kinship function $\kappa_{\rho}(\cdot)$ via ~\eqref{eq:poly_kinship}.
    \State Calculate risk integral $V_{\kappa}^{(i)}(\mat{x})$ via the quadrature rule obtained from~\eqref{eq:quadrature_xi}. 
    \State Seek the optimal design of problem~\eqref{eq:CC_int_poly_kinship} via a global polynomial optimization solver.
  \end{algorithmic}
\end{algorithm}

Theoretically, we need a quadrature rule to exactly calculate the integration up to order $p \rho$ in~\eqref{eq:integral_in_cc}. 
The exact quadrature rule can be computed offline via solving~\eqref{eq:quadrature_xi} with $q=\lceil \frac{\rho p}{2} \rceil$. 
In practice, a low-order quadrature rule, like $q=p$, often offers sufficient numerical accuracy. 
Therefore, we can directly use the quadrature rule used when building surrogate model~\eqref{eq:gPC_approx} to calculate the risk integral $V_{\kappa}^{(i)}(\mat{x})$. 
Since the quadrature points will not be simulated, it does not introduce any additional computational burdens. 
Based on the quadrature rule, problem~\eqref{eq:CC_poly_kinship} can be converted to the following deterministic constrained polynomial optimization~\eqref{eq:CC_int_poly_kinship}:
\begin{equation}\label{eq:CC_int_poly_kinship}
    \begin{aligned}
        \max_{\mat{x} \in \mat{X}} \quad& \E_{\vecpar} [\hat{f}(\mat{x},\vecpar)] \\
        \st \quad &  
        \sum_{l=1}^{M} w_l \kappa_{\rho}(\upsilon_i(\mat{x},\vecpar_l)) \le \epsilon_i, \quad \forall i \in [n].\\
    \end{aligned}
\end{equation}
Note that the expectation value in the objective function can be easily obtained since $\hat{f}(\mat{x},\vecpar)$ is a generalized polynomial-chaos expansion~\cite{zhang2013stochastic,cui2018stochastic}.

\subsection{Algorithm Summary}

We summarize PoBO in Alg.~\ref{alg:flow}. Below are some remarks: 
\begin{itemize}
\item In line 4 of Alg.~\ref{alg:flow}, the optimal polynomial kinship functions~\eqref{eq:poly_kinship} can be computed offline and stored as a look-up table. 
\item In line 5 of Alg.~\ref{alg:flow}, the optimization-based quadrature rule in~\cite{cui2018stochastic} can be used to calculate the risk integral without any additional simulations. The quadrature rule can be computed offline as well.
\item This method enables a global polynomial optimization solver to obtain the optimal design of \eqref{eq:CC_int_poly_kinship}.
\end{itemize}
The curse of dimensionality could be a challenge for Line 2 and Line 6.
For the surrogate modeling step, we can utilize some high-dimensional uncertainty quantification techniques~\cite{he2021high,cui2019high} to reduce the cost.
In the design optimization step, the current limitation comes from the polynomial optimization solver. 
Typically, the polynomial optimization problem can be reformulated as a convex moment problem. Under very mild assumptions, we can build a series of semidefinite programming problems whose solutions are proved to converge monotonically and asymptotically to the global optimum~\cite{lasserre2001global,lasserre2008semidefinite, nie2014optimality}.
The relaxed semidefinite programming problems have the size of $O({d_1}^p)$ with the number of design variables ${d_1}$ and polynomial order $p$. Although it grows polynomially with the number of design variables, it can be challenging when $p$ is high.
Fortunately, the design optimization does not suffer from the number of dimensions of process variations ${d_2}$.
The challenge caused by the high dimensionality of $\mat{x}$ may be addressed in the future by using other nonlinear optimization solvers, or a better polynomial optimization solver (e.g. a sparse polynomial solver) that can exploit the sparse structure of the polynomial surrogate.

\section{Numerical Results}
\label{sec:numerical_results}
In this section, we validate the proposed PoBO framework via the synthetic example and two realistic photonic IC examples from~\cite{cui2020chance}.
The polynomial optimization is solved via GloptiPoly 3~\cite{henrion2009gloptipoly}, which is a global optimization solver based on hierarchical semidefinite programming. Our codes are implemented in MATLAB and run on a computer with a 2.3 GHz CPU and 16 GB memory. 
\begin{table}[t]
\centering
\caption{Optimization Results for Synthetic Function}
\label{tab:syn_result}
\begin{adjustbox}{width=3.3in}
\begin{threeparttable}
\begin{tabular}{cccccc}
\toprule
Risk level $\epsilon$  &Method & Objective &  $\Delta_1$ (\%) & $\Delta_2$ (\%) & Yield (\%)\\
\midrule 
\multirow{2}{*}{0.01} 
&Moment~\cite{cui2020chance} &N/A\tnote{*}     &N/A\tnote{*}   &N/A\tnote{*}  &N/A\tnote{*}\\
&\textbf{Proposed} &\textbf{1.14}   &\textbf{1.01}  &\textbf{0.99}  & 99.98   \\
\hline
\multirow{2}{*}{0.05} 
&Moment~\cite{cui2020chance} &1.88     &5.25   &5.26  &99.98  \\
&\textbf{Proposed} &\textbf{2.11}   &\textbf{5.21}  &\textbf{3.97}  & 98.76 \\
\hline
\multirow{2}{*}{0.1}
&Moment~\cite{cui2020chance} &2.19     &\textbf{10.47}   &10.98 &99.36  \\
&\textbf{Proposed} &\textbf{2.26}   &10.67  &\textbf{7.98} & 97.08  \\
\bottomrule
\end{tabular}
\begin{tablenotes}
   \item[*] The algorithm fails with no feasible solution. 
\end{tablenotes}
\end{threeparttable}
\end{adjustbox}
\end{table}

\begin{figure}[t]
    \centering
    \includegraphics[width = 3.3in]{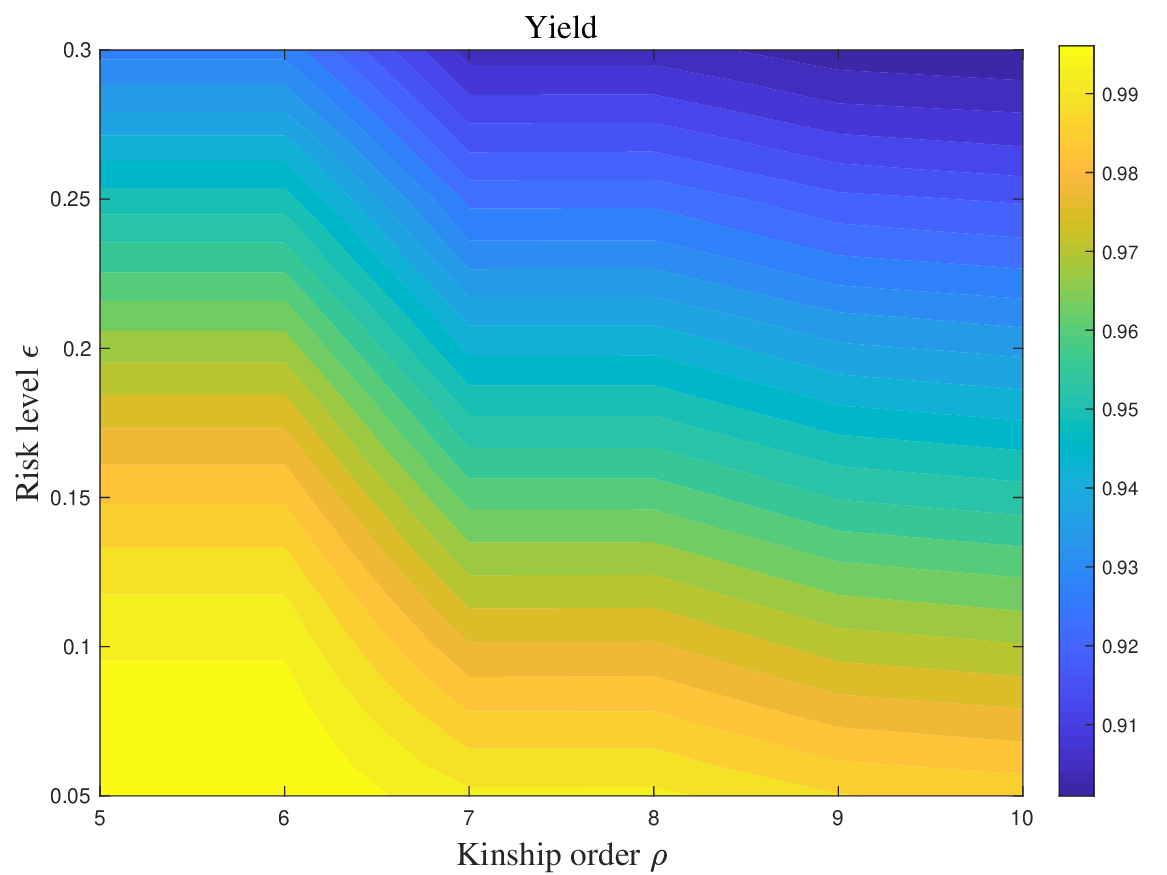}
    \caption{Yield of \eqref{eq:syn_func} given different risk levels $\epsilon$ and polynomial kinship orders $\rho$.}
    \label{fig:different_rho}
\end{figure}

\begin{figure}[t]
    \centering
    \includegraphics[width = 3.3in]{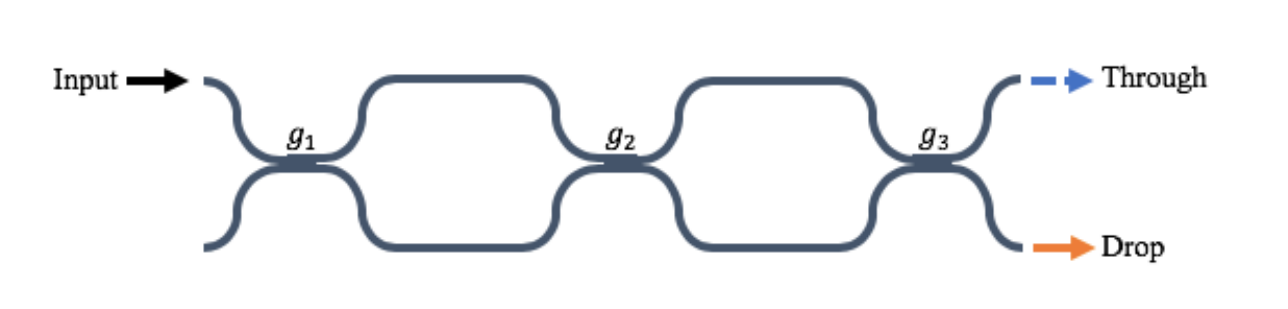}
    \caption{The schematic of a third-order Mach-Zehnder interferometer.}
    \label{fig:benchmark_MZI}
\end{figure}
\begin{figure*}[t!]
    \centering
    \includegraphics[width = 6.6in]{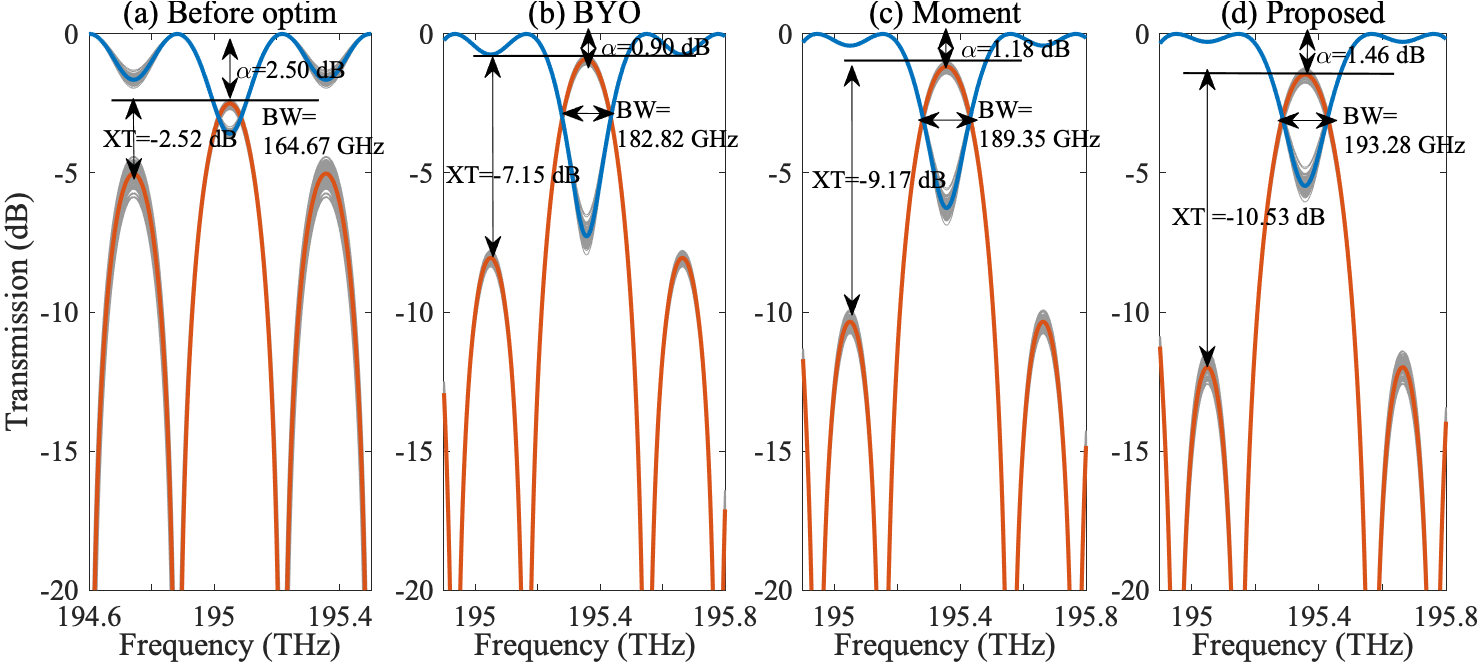}
    \caption{The transmission curves of the MZI. The grey lines show the performance uncertainties. The orange and blue curves show the transmission rates at the drop and through ports, respectively. The mean values of the bandwidth, crosstalk, and attenuation are denoted as BW, XT, and $\alpha$, respectively. (a) The initial design: $\mat{x}$=[150, 150, 150]; (b) Design after Bayesian yield optimization~\cite{wang2017efficient}: $\mat{x}$=[286.63, 170.59, 299.3]; (c) Design with the moment-bounding yield-aware optimization~\cite{cui2020chance}: $\mat{x}$=[300, 149.67, 300]; (d) Design with the proposed PoBO method: $\mat{x}$=[300, 112.15, 300].}
    \label{fig:MZI_opt_result}
\end{figure*}
\begin{table*}[t]
\centering
\caption{Optimization Results for MZI Benchmark}
\label{tab:MZI_result}
\begin{threeparttable}
\begin{tabular}{ccccccc}
\toprule
 Risk level $\epsilon$    & Method  & $\E_{\vecpar}[\text{BW}]$ (GHz) &  $\Delta_1$ (\%) & $\Delta_2$ (\%) & Yield (\%) & Simulation \# \\
\midrule
\multirow{2}{*}{0.05} 
&Moment~\cite{cui2020chance}    &184.53     &5.26  &5.26   & 100 & 35 \\
&\textbf{Proposed}                 &\textbf{190.99}     &\textbf{5.26}   & \textbf{5.26} & 100 & 35 \\
\hline
\multirow{2}{*}{0.07} 
 &Moment~\cite{cui2020chance}     &187.02     &7.53  &7.53   & 100 & 35 \\
&\textbf{Proposed}          &\textbf{192.10}     &\textbf{7.53}   & \textbf{7.2}  & 99.7 & 35 \\
\hline
\multirow{2}{*}{0.1} 
 &Moment~\cite{cui2020chance}    &189.35     &11.11  &11.11 & 100 & 35 \\
 &\textbf{Proposed}           &\textbf{193.28}     &\textbf{11.11}  & \textbf{4.22} & 93.8  & 35 \\
\hline
N/A\tnote{*}   
&BYO~\cite{wang2017efficient}    &182.82   & N/A\tnote{*}  &N/A\tnote{*} &100 & 2020 \\
\bottomrule
\end{tabular}
\begin{tablenotes}
   \item[*] No risk level is defined for BYO method. Correspondingly, no gap $\Delta$ is defined. 
  \end{tablenotes}
\end{threeparttable}
\end{table*}

\begin{figure}[t!]
    \centering
    \includegraphics[width = 3.3in]{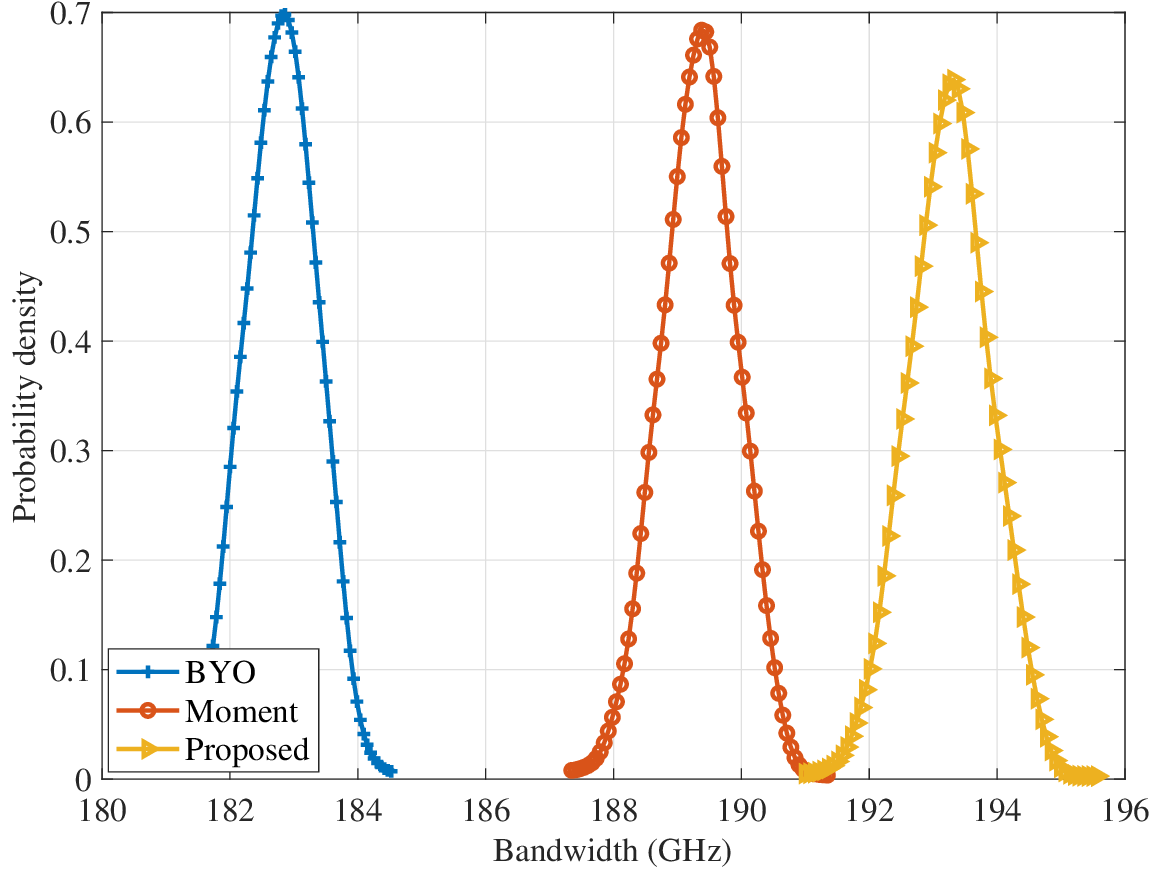}
    \caption{The probability density function of the optimized bandwidth of the MZI by Bayesian yield optimization~\cite{wang2017efficient}, moment bounding~\cite{cui2020chance} and the proposed PoBO (with $\epsilon=0.1$).}
    \label{fig:MZI_BW_pdf}
\end{figure}

\textbf{Baseline Methods.} We choose the moment-bounding chance-constrained optimization~\cite{cui2020chance} as the baseline for comparison. On the photonic IC benchmarks, we further compare our method with the Bayesian yield optimization (BYO) method~\cite{wang2017efficient}, a recent state-of-the-art yield optimization approach. 

\textbf{Gap of chance constraints.}
We modify the indicator function~\eqref{eq:indicator} to define an indicator function ${I_i}(\mat{x},\vecpar)$ for each individual design constraint in yield definition: 
\begin{equation}
{{I_i}(\mat{x},\vecpar)= \begin{cases}
    1, & \hat{y}_i(\mat{x},\vecpar) \le u_i;\\
    0, & \text{otherwise} \end{cases}, \forall i=[n].} \nonumber
\end{equation}
With $N$ random samples, the {\it individual success rate} for each design constraint is evaluated as $ {Y_i}(\mat{x}) = \begin{matrix} \sum_{j=1}^N {I_i}(\mat{x},\vecpar_j)/ N. \end{matrix} $
The gap for the $i$-th chance constraint is the relative difference between $Y_i$ and the pre-specified success rate $1-\epsilon_i$:
\begin{equation}\label{eq:yield_gap}
    \Delta_i = \frac{{Y_i}(\mat{x}) - (1-\epsilon_i)}{1-\epsilon_i}, \; \forall i=[n].
\end{equation}
The chance-constrained optimization can always provide a solution to certify the yield requirement controlled by $\epsilon_i $'s if a feasible solution exists. Therefore, $\Delta_i$ is always non-negative.
Typically, a tighter probabilistic constraint bounding leads to a larger feasible region and allows us to explore the optimal design in a larger space, which is more likely to utilize more risk budgets. Therefore, we use Eq.~\eqref{eq:yield_gap} to measure the gap of feasible regions and the bounding quality.
Notice that we do not attempt to achieve the highest yield.
Instead, our goal is to {\it avoid over-conservative design while ensuring the pre-specified yield requirement}. Therefore, given a certain risk level, we {\bf prefer a smaller gap $\Delta_i$} and a less conservative design solution with better objective performance.

\subsection{Synthetic Function}
\label{sec:synthetic_example}
We first consider a synthetic function with design variables $\mat{x} \in \mat{X}={[-1, 1]}^2$ and random parameters $\vecpar$ following a truncated Gaussian mixture model. Specifically, we assume  
$\mu(\vecpar) = \frac{1}{2}\TN(\bar{\boldsymbol{\mu}}_1,\mat{\Sigma}_1,\mat{a}_1,\mat{b}_1) + \frac{1}{2}\TN(\bar{\boldsymbol{\mu}}_2,\mat{\Sigma}_2,\mat{a}_2,\mat{b}_2)$, with 
$\bar{\boldsymbol{\mu}}_1 = -\bar{\boldsymbol{\mu}}_2= {[0.1, -0.1]}^T$, $\mat{\Sigma}_1 = \mat{\Sigma}_2 =  10^{-2}\begin{bmatrix}
1& -0.75  \\
-0.75 & 1 
\end{bmatrix}$, $\mat{a}_1 = -{[0.2, 0.4]}^T$, $\mat{a}_2 = -{[0.4, 0.2]}^T$,  $\mat{b}_1 = {[0.4, 0.2]}^T$ and $\mat{b}_2 = {[0.2, 0.4]}^T$. Here $\TN(\bar{\boldsymbol{\mu}},\mat{\Sigma},\mat{a},\mat{b})$ denotes a distribution that is a normal distribution with mean $\bar{\boldsymbol{\mu}}$ and variance $\mat{\Sigma}$ in the box $[\mat{a}, \mat{b}]$.

We consider the following chance-constrained optimization:
\begin{equation}\label{eq:syn_func}
    \begin{aligned}
        \max_{\mat{x} \in \mat{X}} \quad & \E_{\vecpar}[3(x_1+\xi_1) - (x_2+\xi_2))]\\
        \st \quad & \P_{\vecpar}({(x_1+\xi_1)}^2+(x_2+\xi_2)\le 1) \ge 1-\epsilon_1,\\
        & \P_{\vecpar}({(x_1+\xi_1)}^2-(x_2+\xi_2)\le 1) \ge 1-\epsilon_2,
    \end{aligned}
\end{equation}
where the two risk levels are set to be equal $\epsilon_1=\epsilon_2=\epsilon$. 
Remark that we do not require $\epsilon_1= \epsilon_2$ since our method naturally handles the individual constraints.

We use 2nd-order polynomials to approximate the three analytical functions and bound the chance constraint with an order-10 optimal polynomial kinship function. 
As shown in Table~\ref{tab:syn_result}, compared with the moment method~\cite{cui2020chance}, the proposed PoBO method produces a better objective value and smaller gaps for chance constraints while meeting the pre-specified yield requirement.  Clearly, a smaller $\epsilon_i$ produces a higher yield. 
The moment bounding method fails to work when $\epsilon=0.01$ while our PoBO can still solve this problem. 
Fig.~\ref{fig:different_rho} shows the obtained yield under different risk levels $\epsilon$ and polynomial kinship orders. The kinship order influences the bounding quality and leads to different yield. However, the results are all of the high quality, leading to certified designs.

\begin{figure}[t]
    \centering
    \includegraphics[width = 3.3in]{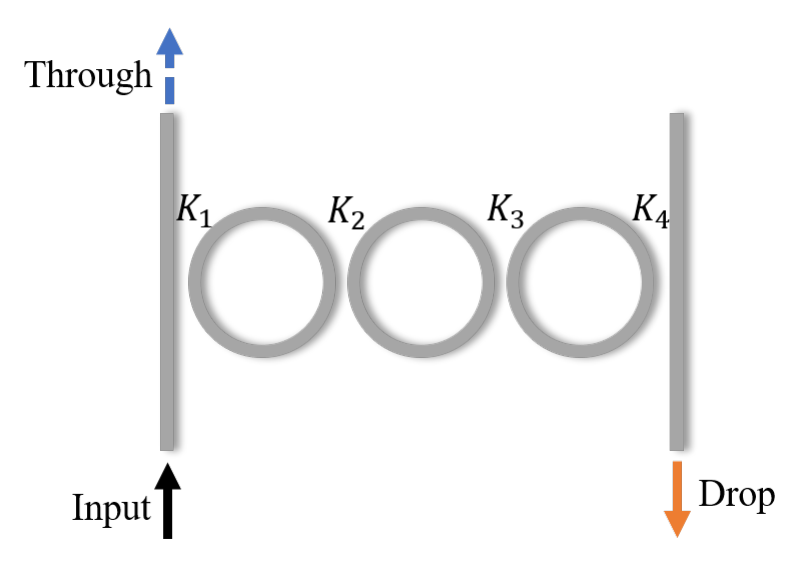}
    \caption{The schematic of a microring add-drop filter.}
    \label{fig:benchmark_Mircoring}
\end{figure}

\begin{figure}[t!]
    \centering
    \includegraphics[width = 3.3in]{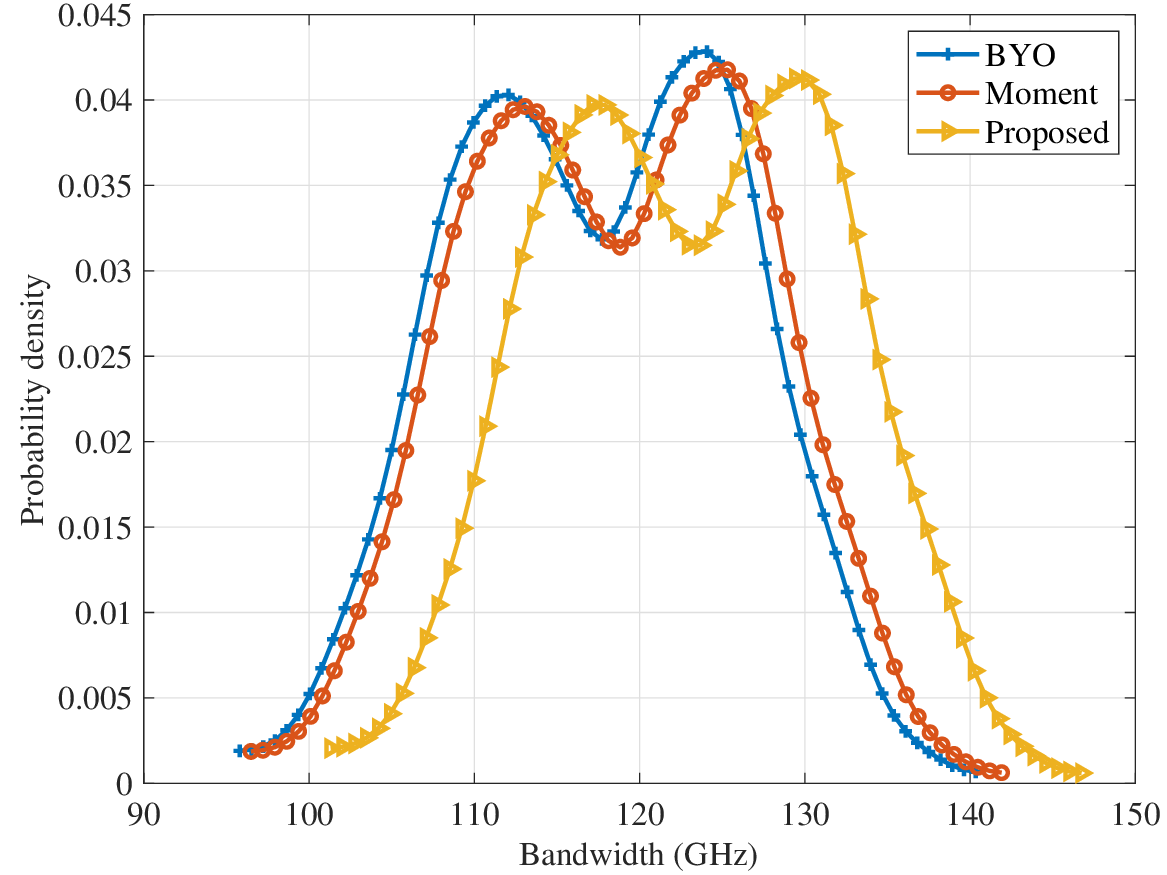}
    \caption{The probability density function of the optimized bandwidth of the microring filter by Bayesian yield optimization~\cite{wang2017efficient}, moment bounding~\cite{cui2020chance} and the proposed PoBO (with $\epsilon=0.1$).}
    \label{fig:Microring_BW_pdf}
\end{figure}

\subsection{Mach-Zehnder Interferometer}
\label{sec:MZI_case}
We consider a third-order Mach-Zehnder interferometer (MZI) which consists of three port coupling and two arms, as shown in Fig.~\ref{fig:benchmark_MZI}.
The coupling coefficients $\tau$ between the MZ arms play an important role in the design, whose relationship with the the gap $g$ (nm) is $\tau = \exp (-\frac{g}{260})$. 
The gap variables $\mat{x}=[g_1, g_2, g_3]$ have the design space of $\mat{X}={[100, 300]}^3$. Their corresponding process variations $\vecpar$ follows a truncated Gaussian mixture distribution (see Appendix~\ref{sec:benchmark_setup}).
We aim to maximize the expected 3-dB bandwidth (BW, in GHz) with probability constraints on the crosstalk (XT, in dB) and the attenuation ($\alpha$, in dB) of the peak transmission. Therefore, the yield-aware chance-constrained design is formulated as
\begin{equation}
\label{eq:MZ_formula}
    \begin{aligned}
        \max_{\mat{x} \in \mat{X}} \quad & \E_{\vecpar}[\text{BW}(\mat{x},\vecpar)]\\
        \st \quad & \P_{\vecpar}(\text{XT}(\mat{x},\vecpar)\le \text{XT}_0) \ge 1-\epsilon_1, \\
        & \P_{\vecpar}(\alpha(\mat{x},\vecpar)\le \alpha_0) \ge 1 -\epsilon_2.
    \end{aligned}
\end{equation}
The two risk levels are set to be equal $\epsilon_1 = \epsilon_2 = \epsilon$. The thresholds of the crosstalk ($\text{XT}_0$) and the attenuation ($\alpha_0$) are -4 dB and 1.6 dB, respectively.

We build three 2nd-order polynomial surrogate models for BW, XT, and $\alpha$, respectively. We further bound the probabilistic yield constraints via an order-5 optimal polynomial kinship function. 
The optimized results and comparisons are listed in Table~\ref{tab:MZI_result}. 
It shows that at the same risk level, the proposed PoBO method can achieve larger bandwidth while meeting the yield requirements and having smaller gaps for the chance constraints. 
The simulation samples are the ones used for building surrogate models.
We list the number of samples to reveal the simulation cost since the simulation time per sample may vary from seconds to hours, which depends on the problem size (small circuits or large circuit) and simulator types (e.g., circuit-level simulation or EM-based PDE simulator).
The proposed PoBO requires the same number of simulations as the moment bounding method~\cite{cui2020chance}, and both of them require much fewer simulation samples than the Bayesian yield optimization due to the efficient surrogate modeling. 
Regarding the CPU time of solving the design optimization problem~\eqref{eq:MZ_formula},  the proposed method takes 15.28 s, 18.19 s, and 15.2 s for the three risk levels, respectively.
The moment method takes 0.39 s, 2.11 s, and 2.21 s, respectively. 
Our Kinship-based optimization is slower than the moment-based method because our method uses higher-order polynomials to bound the probabilistic constraints. However, the optimization overhead is negligible compared with the sample simulation time (especially when a PDE-based simulator is employed).
The BYO takes even less than 0.1 s in the optimization steps, but it requires a huge number of simulation samples, causing a much larger overall CPU time than chance-constrained optimization.
Fig.~\ref{fig:MZI_opt_result} compares the frequency response before and after the yield-aware optimization with $\epsilon=0.1$. 
Our PoBO method has a higher expected bandwidth compared with the Bayesian yield optimization, the moment bounding method, and the initial design.
Fig.~\ref{fig:MZI_BW_pdf} further shows the probability density of the optimized bandwidth by different models. It clearly shows that our proposed method produces the highest bandwidth while meeting the yield requirement. 
\subsection{Microring Add-Drop Filter}
\label{sec:Microring_case}
We further consider the design of an optical add-drop filter consisting of three identical silicon microrings coupled in series, as shown in Fig.~\ref{fig:benchmark_Mircoring}. 
The design variables are the coupling coefficients $\mat{x}=[K_1,K_2,K_3,K_4]$ that are to be optimized within the interval of $\mat{X}={[0.3, 0.6]}^4$. The process variations $\vecpar$ are described by a truncated Gaussian mixture model (see Appendix~\ref{sec:benchmark_setup}).
The design problem is to maximize the expected 3-dB bandwidth (BW, in GHz) with  constraints on the extinction ratio (RE, in dB) of the transmission at the drop port and the roughness ($\sigma_{\text{pass}}$, in dB) of the passband that takes its standard deviation, formulated as:
\begin{equation}
\label{eq:ring_formula}
    \begin{aligned}
        \max_{\mat{x} \in \mat{X}} \quad & \E_{\vecpar}[\text{BW}(\mat{x},\vecpar)]\\
        \st \quad & \P_{\vecpar}(\text{RE}(\mat{x},\vecpar)\ge \text{RE}_0) \ge 1-\epsilon_1, \\
        & \P_{\vecpar}(\sigma_{\text{pass}}(\mat{x},\vecpar)\le \sigma_0) \ge 1 -\epsilon_2.
    \end{aligned}
\end{equation}
The two risk levels are set to be equal $\epsilon_1 = \epsilon_2 = \epsilon$. The thresholds of the extinction ratio ($\text{RE}_0$) and the roughness
of the passband ($\sigma_0$) are 20 dB and 0.65 dB, respectively. 

\begin{figure*}[t!]
    \centering
    \includegraphics[width = 6.6in]{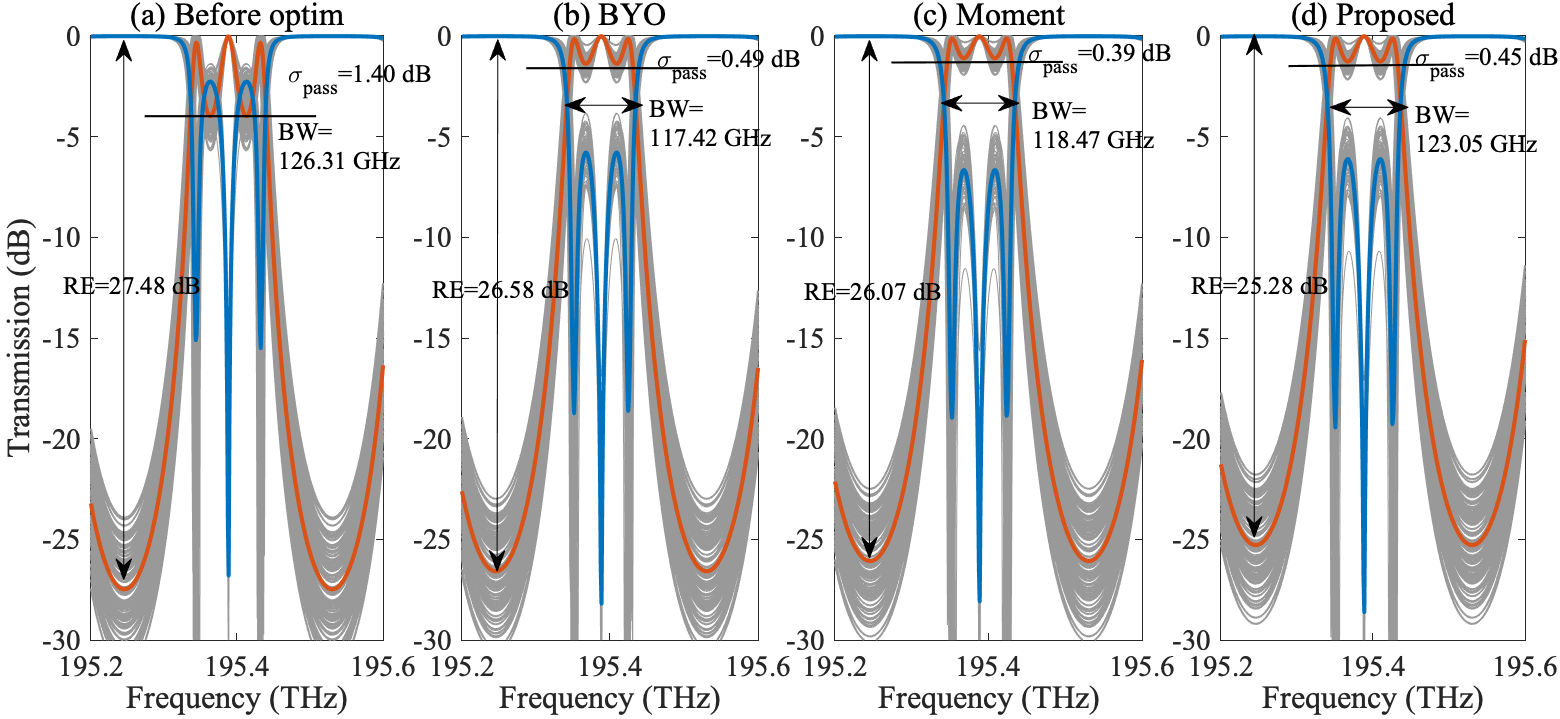}
    \caption{The transmission curves of the microring add-drop filter. The grey lines show the performance uncertainties. The orange and blue curves show the transmission rates at the drop and through ports, respectively. 
    The mean values of the extinction ratio, bandwidth, and roughness are denoted as RE, BW, and $\sigma_{\text{pass}}$, respectively. 
    (a) The initial design (infeasible): $\mat{x}$=[0.45, 0.45, 0.45, 0.45]; (b) Design after Bayesian yield optimization~\cite{wang2017efficient}: $\mat{x}$=[0.5758, 0.3718, 0.3720, 0.5746]; 
    (c) Design with the moment-bounding yield-aware optimization~\cite{cui2020chance}: $\mat{x}$=[0.6, 0.3751, 0.3642 0.6]; 
    (d) Design with the proposed PoBO optimization: $\mat{x}$=[0.6, 0.3971, 0.3642, 0.6].
    }
    \label{fig:Microring_opt_result}
\end{figure*}

\begin{table*}[t]
\centering
\caption{Optimization Results for Microring Add-drop Filter benchmark}
\label{tab:Microring_result}
\begin{threeparttable}
\begin{tabular}{ccccccc}
\toprule
Risk level $\epsilon$  & Method  & $\E_{\vecpar}[\text{BW}]$ (GHz) & $\Delta_1$ (\%) & $\Delta_2$ (\%) & Yield (\%) & Simulation \# \\
\midrule
\multirow{2}{*}{0.05} 
 & Moment~\cite{cui2020chance}   &N/A\tnote{*}      &N/A\tnote{*}  &N/A\tnote{*} &N/A\tnote{*}   &  65 \\
&\textbf{Proposed}           &\textbf{116.85}     &\textbf{5.26}  &\textbf{4.84}  &99.6  & 65 \\ 
\hline
\multirow{2}{*}{0.07} 
&Moment~\cite{cui2020chance}    &112.64    &7.53   &7.42  & 99.9 &  65  \\
&\textbf{Proposed}          &\textbf{120.05}   &\textbf{7.53} & \textbf{6.67}   &99.2 & 65   \\
\hline
\multirow{2}{*}{0.1} 
&Moment~\cite{cui2020chance}     &118.47     &11.11   &10.78  & 99.7  &  65 \\ 
&\textbf{Proposed}          &\textbf{123.05}  &\textbf{11.11} & \textbf{8.33}   &97.5   & 65 \\ 
\hline
N/A & BYO~\cite{wang2017efficient}    &117.42      & N/A  &N/A & 95.1 & 2020 \\
\bottomrule
\end{tabular}
\begin{tablenotes}
   \item[*] The algorithm fails with no feasible solution.                
  \end{tablenotes}
\end{threeparttable}
\end{table*}
Similarly, we build three 2nd-order polynomial surrogate models for BW, RE, and $\sigma_{\text{pass}}$ and bound the chance constraints via an order-5 optimal polynomial kinship function.
The optimized results and comparisons are shown in Table~\ref{tab:Microring_result}. 
The moment bounding~\cite{cui2020chance} fails when $\epsilon=0.05$ since no feasible solution is found under its over-conservative bounding. 
For three risk levels, the optimization time for the moment method is N/A (no feasible solution), 5.89 s, 5.5 s, respectively. The proposed method takes 100.58 s, 110.58 s, and 103.74 s, respectively, but the overhead is negligible compared with the simulation cost. The BYO takes less than 0.1 s in the optimization step, but it takes the most overall time due to the high cost of simulating many samples.
At all risk levels, the proposed method can achieve larger bandwidth while meeting the yield requirements and having smaller gaps for the chance constraints.  
As shown in Fig.~\ref{fig:Microring_BW_pdf}, the proposed PoBO has a higher expected bandwidth compared to Bayesian yield optimization and existing yield-aware chance-constrained optimization via moment bounding~\cite{cui2020chance}.  
Fig.~\ref{fig:Microring_opt_result} shows the frequency response before and after the yield-aware optimization with $\epsilon=0.1$.
For this microring filter benchmark, we further consider a special case where design objective and constraints are the same quantity. For this case, our method still outperforms others (see the details in Appendix~\ref{sec:same_yf}).

\section{Conclusion and Future Work}
This paper has proposed a novel \textbf{Po}lynomial \textbf{B}ounding method for chance-constrained yield-aware \textbf{O}ptimization (PoBO) of photonic ICs with truncated non-Gaussian correlated uncertainties. 
In PoBO, we first construct surrogate models with a few simulation samples for the interested quantities based on available uncertainty quantification solvers. 
To avoid over-conservative design, we have proposed an optimal polynomial kinship function to tightly bound the chance constraints.   
This bounding method can be efficiently implemented without additional simulations.
It also preserves the polynomial form and enables seeking a globally optimal design.
The proposed PoBO is verified with a synthetic function, a Mach-Zehnder interferometer, and a microring add-drop filter. In all experiments, the proposed PoBO has achieved the yield requirements, produced tighter bounds on the chance constraints than the state-of-the-art moment bounding method, and led to better design objective performances with a few simulation samples. On the two photonic IC examples, the proposed method has also reduced the simulation samples by $58\times$ and $31\times$ compared with Bayesian yield optimization.

The theoretical and numerical results of this work have laid the foundation of many future topics. Possible extensions of this work include, but are not limited to: (1) improved algorithms to handle many design parameters and process variation, (2) formulations and algorithms to handle joint chance constraints for yield descriptions, (3) PoBO with non-polynomial surrogates. The proposed framework is very generic, and it can also be employed in other applications beyond EDA, including probabilistic control of energy systems, safety-critical control of autonomous systems, and so forth. 

\appendices

\section{Solution to Optimal Polynomial Kinship~\eqref{eq:poly_kinship}}
\label{sec:optimal_poly}
Given a $\rho$-order optimal kinship function $\kappa_{\rho}(\cdot)$, we introduce two positive semidefinite matrices $\mat{Y}_1 \in \R^{(n_1+1)\times(n_1+1)}$ and $\mat{Y}_2 \in \R^{(n_2+1)\times(n_2+1)}$ with $n_1=\lfloor (\rho-1)/2 \rfloor$ and $n_2=\lfloor (\rho-2)/2 \rfloor$. We further define two series of Hankel matrices $\mat{H}_{1,m} \in \R^{(n_1+1)\times(n_1+1)}$ and $\mat{H}_{2,m} \in \R^{(n_2+1)\times(n_2+1)}$ as
\begin{equation}
    \mat{H}_{k,m}\left(i,j\right)= \begin{cases}
    1, & i+j=m+1 \\
    0, & \text{otherwise}
    \end{cases}, k = 1, 2.
\end{equation}
Based on the sum-of-square representation of a nonnegative univariate polynomial, we can reformulate~\eqref{eq:poly_kinship} as a finite dimensional semidefinite programming~\eqref{eq:sol_poly_kinship}, which can be handled by many efficient solvers and toolboxes~\cite{YALMIP}. The detailed proof will be similar to the Corollary 1 of~\cite{feng2010kinship}, where the difference is the order of nonnegative polynomial.
\begin{equation}\label{eq:sol_poly_kinship}
\centering
\begin{array}{c}
\min_{{\zeta_0},\ldots,{\zeta_\rho}, {\mat{Y}_1}, {\mat{Y}_2}} \; \sum_{i=0}^\rho \frac{(-1)^i}{i+1}\zeta_i\\
    \st \; \zeta_0 = 1, \\
    \sum_{i=0}^\rho (-1)^i \zeta_i = 0, \\
    \text{Tr}(\mat{Y}_1 \mat{H}_{1,m}) + \text{Tr}(\mat{Y}_2 \mat{H}_{2,m}) = \\
    \sum_{i=m+1}^\rho \frac{i!(-1)^{i-m-1}}{k!(i-m-1)!}\zeta_i,
    m=0,1,\ldots,\rho-2, \\
    \mat{Y}_k \succeq 0, \; k=1,2.
\end{array}
\end{equation}
We list some examples of the solved polynomial as below: for $\rho=5$, the polynomial coefficients are ${[1, 7.87, 24.62, 37.49, 27.74, 8.00]}$. 
For $\rho=8$, the polynomial coefficients are ${[1, 13.4, 75, 223.71, 384.3, 381.57, 203.57, 45.19]}$.

\section{Details about Benchmark Setup}
\label{sec:benchmark_setup}
In the MZI benchmark (Sec.~\ref{sec:MZI_case}), the process variations on the coupling coefficients are described by a truncated Gaussian mixture model with two components:
\begin{equation}\label{eq:process_variation_benchmark}
\mu(\mat{\xi})=  \frac{1}{2}\TN_1(\bar{\boldsymbol{\mu}}_1,\mat{\Sigma}_1,\mat{a}_1,\mat{b}_1) + \frac{1}{2}\TN_2(\bar{\boldsymbol{\mu}}_2,\mat{\Sigma}_2,\mat{a}_2,\mat{b}_2), 
\end{equation}
where $\bar{\boldsymbol{\mu}}_1 = -\bar{\boldsymbol{\mu}}_2 = {[3,3,3]}^T$,  
$\mat{\Sigma}_1=\mat{\Sigma}_2 = 3^2\begin{bmatrix}
1 & 0.4 & 0.1 \\
0.4 & 1 & 0.4 \\
0.1 & 0.4 & 1
\end{bmatrix},
\mat{a}_1=-\mat{b}_2={[-6,-6,-6]}^T, \mat{a}_2=-\mat{b}_1={[-12,-12,-12]}^T.
$ 

In the microring benchmark (Sec.~\ref{sec:Microring_case}), the process variations on the coupling coefficients are described the same as Eq.~\eqref{eq:process_variation_benchmark} with different parameters
$\bar{\boldsymbol{\mu}}_1 = -\bar{\boldsymbol{\mu}}_2 = {[0.03,0.03,0.03,0.03]}^T$,
$\mat{\Sigma}_1 = \mat{\Sigma}_2 = 0.03^2\begin{bmatrix}
1 & 0.4 & 0.1 & 0.4\\
0.4 & 1 & 0.4 &0.1\\
0.1 & 0.4 & 1 &0.4\\
0.4 & 0.1 &0.4 &1
\end{bmatrix},
\mat{a}_1=-\mat{b}_2={[-0.06,-0.06,-0.06,-0.06]}^T, \mat{a}_2=-\mat{b}_1={[-0.12,-0.12,-0.12,-0.12]}^T.$ 

\section{Additional Case Study}
\label{sec:same_yf}
\begin{table}[t]
\centering
\caption{Bandwidth-Constrained Optimization Results for Microring Add-drop Filter}
\label{tab:Microring_result_sameyf}
\begin{adjustbox}{width=3.3in}
\begin{threeparttable}
\begin{tabular}{cccccc}
\toprule
Risk level $\epsilon$  & Method  & $\E_{\vecpar}[\text{BW}]$ (GHz) & $\Delta$ (\%)  & Yield (\%) & Simulation \# \\
\midrule
\multirow{2}{*}{0.05} 
 & Moment~\cite{cui2020chance}   &N/A\tnote{*}        &N/A\tnote{*} &N/A\tnote{*}   &  65 \\
&\textbf{Proposed}           &\textbf{105.66}     &\textbf{3.89}   &98.7  & 65  \\
\hline
\multirow{2}{*}{0.07} 
&Moment~\cite{cui2020chance}    & 93.96    & 7.53   & 100 &  65  \\
&\textbf{Proposed}          &\textbf{107.28}   &\textbf{4.09}    &96.8 & 65  \\ 
\hline
\multirow{2}{*}{0.1} 
&Moment~\cite{cui2020chance}     & 98.88     & 11.11     & 100  &  65 \\ 
&\textbf{Proposed}          &\textbf{109.01}  &\textbf{3.67}    &93.3   & 65 \\ 
\hline
N/A & BYO~\cite{wang2017efficient}    &102.05      & N/A  & 99.8 &  2020 \\
\bottomrule
\end{tabular}
\begin{tablenotes}
   \item[*] The algorithm fails with no feasible solution.                
  \end{tablenotes}
\end{threeparttable}
\end{adjustbox}
\end{table}

In this case study, for the same microring add-drop filter in section~\ref{sec:Microring_case}, 
we aim to optimize its bandwidth while holding a constraint on the bandwidth as well. The yield-aware chance-constrained design is formulated as
\begin{equation}
    \begin{aligned}
        \max_{\mat{x} \in \mat{X}} \quad & \E_{\vecpar}[\text{BW}(\mat{x},\vecpar)]\\
        \st \quad & \P_{\vecpar}(\text{BW}(\mat{x},\vecpar) \le \text{BW}_0) \ge 1-\epsilon,
    \end{aligned}
\end{equation}
where the bandwidth threshold $\text{BW}_0$ is 120 GHz. The design solutions and comparisons are summarized in Table~\ref{tab:Microring_result_sameyf}. Similar to the cases in section~\ref{sec:numerical_results}, our method still works when the objective and constraint are the same quantity and outperforms the other approaches.

\small{
\bibliographystyle{Bib/IEEEtran}
\bibliography{Bib/reference}

\begin{thebibliography}{10}
\providecommand{\url}[1]{#1}
\csname url@samestyle\endcsname
\providecommand{\newblock}{\relax}
\providecommand{\bibinfo}[2]{#2}
\providecommand{\BIBentrySTDinterwordspacing}{\spaceskip=0pt\relax}
\providecommand{\BIBentryALTinterwordstretchfactor}{4}
\providecommand{\BIBentryALTinterwordspacing}{\spaceskip=\fontdimen2\font plus
\BIBentryALTinterwordstretchfactor\fontdimen3\font minus
  \fontdimen4\font\relax}
\providecommand{\BIBforeignlanguage}[2]{{%
\expandafter\ifx\csname l@#1\endcsname\relax
\typeout{** WARNING: IEEEtran.bst: No hyphenation pattern has been}%
\typeout{** loaded for the language `#1'. Using the pattern for}%
\typeout{** the default language instead.}%
\else
\language=\csname l@#1\endcsname
\fi
#2}}
\providecommand{\BIBdecl}{\relax}
\BIBdecl

\bibitem{gielen2008emerging}
G.~Gielen, P.~De~Wit, E.~Maricau, J.~Loeckx, J.~Martin-Martinez, B.~Kaczer,
  G.~Groeseneken, R.~Rodriguez, and M.~Nafria, ``Emerging yield and reliability
  challenges in nanometer {CMOS} technologies,'' in \emph{Proc. Design, Autom.
  Test Eur. Conf. Exhibit.}, 2008, pp. 1322--1327.

\bibitem{chen2013process}
X.~Chen, M.~Mohamed, Z.~Li, L.~Shang, and A.~R. Mickelson, ``Process variation
  in silicon photonic devices,'' \emph{Appl. Opt.}, vol.~52, no.~31, pp.
  7638--7647, 2013.

\bibitem{lipka2016systematic}
T.~Lipka, J.~M{\"u}ller, and H.~K. Trieu, ``Systematic nonuniformity analysis
  of amorphous silicon-on-insulator photonic microring resonators,'' \emph{J.
  Light. Technol.}, vol.~34, no.~13, pp. 3163--3170, 2016.

\bibitem{bogaerts2019layout}
W.~Bogaerts, Y.~Xing, and U.~Khan, ``Layout-aware variability analysis, yield
  prediction, and optimization in photonic integrated circuits,'' \emph{IEEE J.
  Sel. Top. Quantum Electron.}, vol.~25, no.~5, pp. 1--13, 2019.

\bibitem{antreich1994circuit}
K.~J. Antreich, H.~E. Graeb, and C.~U. Wieser, ``Circuit analysis and
  optimization driven by worst-case distances,'' \emph{IEEE Trans.
  Comput.-Aided Design Integr. Circuits Syst.}, vol.~13, no.~1, pp. 57--71,
  1994.

\bibitem{gong2014variability}
F.~Gong, Y.~Shi, H.~Yu, and L.~He, ``Variability-aware parametric yield
  estimation for analog/mixed-signal circuits: Concepts, algorithms, and
  challenges,'' \emph{IEEE Des. Test}, vol.~31, no.~4, pp. 6--15, 2014.

\bibitem{lu2017performance}
Z.~Lu, J.~Jhoja, J.~Klein, X.~Wang, A.~Liu, J.~Flueckiger, J.~Pond, and
  L.~Chrostowski, ``Performance prediction for silicon photonics integrated
  circuits with layout-dependent correlated manufacturing variability,''
  \emph{Opt. Express}, vol.~25, no.~9, pp. 9712--9733, 2017.

\bibitem{pond2017predicting}
J.~Pond, J.~Klein, J.~Fl{\"u}ckiger, X.~Wang, Z.~Lu, J.~Jhoja, and
  L.~Chrostowski, ``Predicting the yield of photonic integrated circuits using
  statistical compact modeling,'' in \emph{Integrated Optics: Physics and
  Simulations III}, vol. 10242, 2017, p. 102420S.

\bibitem{xu2009regular}
Y.~Xu, K.-L. Hsiung, X.~Li, L.~T. Pileggi, and S.~P. Boyd, ``Regular
  analog/{RF} integrated circuits design using optimization with recourse
  including ellipsoidal uncertainty,'' \emph{IEEE Trans. Comput.-Aided Design
  Integr. Circuits Syst.}, vol.~28, no.~5, pp. 623--637, 2009.

\bibitem{yu2007yield}
G.~Yu and P.~Li, ``Yield-aware analog integrated circuit optimization using
  geostatistics motivated performance modeling,'' in \emph{Proc. Intl. Conf.
  Computer Aided Design}, 2007, pp. 464--469.

\bibitem{tiwary2006generation}
S.~K. Tiwary, P.~K. Tiwary, and R.~A. Rutenbar, ``Generation of yield-aware
  {Pareto} surfaces for hierarchical circuit design space exploration,'' in
  \emph{Proc. Design Autom. Conf}, 2006, pp. 31--36.

\bibitem{li2009yield}
Y.~Li and V.~Stojanovic, ``Yield-driven iterative robust circuit optimization
  algorithm,'' in \emph{Proc. Design Autom. Conf}, 2009, pp. 599--604.

\bibitem{liu2011efficient}
B.~Liu, F.~V. Fern{\'a}ndez, and G.~G. Gielen, ``Efficient and accurate
  statistical analog yield optimization and variation-aware circuit sizing
  based on computational intelligence techniques,'' \emph{IEEE Trans.
  Comput.-Aided Design Integr. Circuits Syst.}, vol.~30, no.~6, pp. 793--805,
  2011.

\bibitem{barros2010analog}
M.~Barros, J.~Guilherme, and N.~Horta, ``Analog circuits optimization based on
  evolutionary computation techniques,'' \emph{Integration}, vol.~43, no.~1,
  pp. 136--155, 2010.

\bibitem{wang2017efficient}
M.~Wang, F.~Yang, C.~Yan, X.~Zeng, and X.~Hu, ``Efficient {Bayesian} yield
  optimization approach for analog and {SRAM} circuits,'' in \emph{Proc. Design
  Autom. Conf}, 2017, pp. 1--6.

\bibitem{singhee2008practical}
A.~Singhee, S.~Singhal, and R.~A. Rutenbar, ``Practical, fast {Monte Carlo}
  statistical static timing analysis: Why and how,'' in \emph{Proc. Intl. Conf.
  Computer Aided Design}, 2008, pp. 190--195.

\bibitem{papoulis2001probability}
A.~Papoulis and H.~Saunders, \emph{Probability, random variables and stochastic
  processes}.\hskip 1em plus 0.5em minus 0.4em\relax McGraw-Hill, 2001.

\bibitem{gu2008efficient}
C.~Gu and J.~Roychowdhury, ``An efficient, fully nonlinear, variability-aware
  non-{Monte-Carlo} yield estimation procedure with applications to {SRAM}
  cells and ring oscillators,'' in \emph{Proc. Asia South Pac. Design Autom.
  Conf.}, 2008, pp. 754--761.

\bibitem{gong2012fast}
F.~Gong, X.~Liu, H.~Yu, S.~X. Tan, J.~Ren, and L.~He, ``A fast
  non-{Monte-Carlo} yield analysis and optimization by stochastic orthogonal
  polynomials,'' \emph{ACM Trans. Des. Autom. Electron. Syst.}, vol.~17, no.~1,
  pp. 1--23, 2012.

\bibitem{9428031}
Z.~Gao and R.~Rohrer, ``Efficient non-{Monte-Carlo} yield estimation,''
  \emph{IEEE Trans. Comput.-Aided Design Integr. Circuits Syst.}, pp. 1--1,
  2021.

\bibitem{shi2019meta}
X.~Shi, H.~Yan, Q.~Huang, J.~Zhang, L.~Shi, and L.~He, ``Meta-model based
  high-dimensional yield analysis using low-rank tensor approximation,'' in
  \emph{Proc. Design Autom. Conf}, 2019, pp. 1--6.

\bibitem{yao2014efficient}
J.~Yao, Z.~Ye, and Y.~Wang, ``An efficient {SRAM} yield analysis and
  optimization method with adaptive online surrogate modeling,'' \emph{IEEE
  Trans. Very Large Scale Integr. (VLSI) Syst.}, vol.~23, no.~7, pp.
  1245--1253, 2014.

\bibitem{li2006asymptotic}
X.~Li, J.~Le, P.~Gopalakrishnan, and L.~T. Pileggi, ``Asymptotic probability
  extraction for nonnormal performance distributions,'' \emph{IEEE Trans.
  Comput.-Aided Design Integr. Circuits Syst.}, vol.~26, no.~1, pp. 16--37,
  2006.

\bibitem{li2004robust}
X.~Li, P.~Gopalakrishnan, Y.~Xu, and T.~Pileggi, ``Robust analog/{RF} circuit
  design with projection-based posynomial modeling,'' in \emph{Proc. Intl.
  Conf. Computer Aided Design}, 2004, pp. 855--862.

\bibitem{li2008quadratic}
X.~Li, Y.~Zhan, and L.~T. Pileggi, ``Quadratic statistical $ max $
  approximation for parametric yield estimation of analog/rf integrated
  circuits,'' \emph{IEEE Trans. Comput.-Aided Design Integr. Circuits Syst.},
  vol.~27, no.~5, pp. 831--843, 2008.

\bibitem{ciccazzo2015svm}
A.~Ciccazzo, G.~Di~Pillo, and V.~Latorre, ``A {SVM} surrogate model-based
  method for parametric yield optimization,'' \emph{IEEE Trans. Comput.-Aided
  Design Integr. Circuits Syst.}, vol.~35, no.~7, pp. 1224--1228, 2015.

\bibitem{ma2020support}
H.~Ma, E.-P. Li, A.~C. Cangellaris, and X.~Chen, ``Support vector
  regression-based active subspace {(SVR-AS)} modeling of high-speed links for
  fast and accurate sensitivity analysis,'' \emph{IEEE Access}, vol.~8, pp.
  74\,339--74\,348, 2020.

\bibitem{sanabria2020gaussian}
A.~C. Sanabria-Borb{\'o}n, S.~Soto-Aguilar, J.~J. Estrada-L{\'o}pez,
  D.~Allaire, and E.~S{\'a}nchez-Sinencio, ``Gaussian-process-based surrogate
  for optimization-aided and process-variations-aware analog circuit design,''
  \emph{Electronics}, vol.~9, no.~4, p. 685, 2020.

\bibitem{wang2017yield}
M.~Wang, W.~Lv, F.~Yang, C.~Yan, W.~Cai, D.~Zhou, and X.~Zeng, ``Efficient
  yield optimization for analog and {SRAM} circuits via {Gaussian} process
  regression and adaptive yield estimation,'' \emph{IEEE Trans. Comput.-Aided
  Design Integr. Circuits Syst.}, vol.~37, no.~10, pp. 1929--1942, 2017.

\bibitem{wang2014enabling}
Y.~Wang, M.~Orshansky, and C.~Caramanis, ``Enabling efficient analog synthesis
  by coupling sparse regression and polynomial optimization,'' in \emph{Proc.
  Design Autom. Conf}, 2014, pp. 1--6.

\bibitem{xiu2003modeling}
D.~Xiu and G.~E. Karniadakis, ``Modeling uncertainty in flow simulations via
  generalized polynomial chaos,'' \emph{J. Comput. Phys.}, vol. 187, no.~1, pp.
  137--167, 2003.

\bibitem{zhang2013stochastic}
Z.~Zhang, T.~A. El-Moselhy, I.~M. Elfadel, and L.~Daniel, ``Stochastic testing
  method for transistor-level uncertainty quantification based on generalized
  polynomial chaos,'' \emph{IEEE Trans. Comput.-Aided Design Integr. Circuits
  Syst.}, vol.~32, no.~10, pp. 1533--1545, 2013.

\bibitem{trinchero2020combining}
R.~Trinchero and F.~G. Canavero, ``Combining {LS-SVM} and {GP} regression for
  the uncertainty quantification of the {EMI} of power converters affected by
  several uncertain parameters,'' \emph{IEEE Trans. Electromagn. Compat.},
  vol.~62, no.~5, pp. 1755--1762, 2020.

\bibitem{li2010finding}
X.~Li, ``Finding deterministic solution from underdetermined equation:
  large-scale performance variability modeling of analog/{RF} circuits,''
  \emph{IEEE Trans. Comput.-Aided Design Integr. Circuits Syst.}, vol.~29,
  no.~11, pp. 1661--1668, 2010.

\bibitem{zhang2014enabling}
Z.~Zhang, X.~Yang, I.~V. Oseledets, G.~E. Karniadakis, and L.~Daniel,
  ``Enabling high-dimensional hierarchical uncertainty quantification by
  {ANOVA} and tensor-train decomposition,'' \emph{IEEE Trans. Comput.-Aided
  Design Integr. Circuits Syst.}, vol.~34, no.~1, pp. 63--76, 2014.

\bibitem{zhang2016big}
Z.~Zhang, T.-W. Weng, and L.~Daniel, ``Big-data tensor recovery for
  high-dimensional uncertainty quantification of process variations,''
  \emph{IEEE Trans. Compon. Packag. Manuf. Technol.}, vol.~7, no.~5, pp.
  687--697, 2016.

\bibitem{He2020EPEPS}
Z.~{He} and Z.~{Zhang}, ``High-dimensional uncertainty quantification via
  active and rank-adaptive tensor regression,'' in \emph{Proc. Electr. Perform.
  Electron. Packag. Syst.}, 2020, pp. 1--3.

\bibitem{he2021high}
Z.~He and Z.~Zhang, ``High-dimensional uncertainty quantification via tensor
  regression with rank determination and adaptive sampling,'' \emph{IEEE Trans.
  Compon. Packag. Manuf. Technol.}, vol.~11, no.~9, pp. 1317--1328, 2021.

\bibitem{He2019ICCAD}
Z.~{He}, W.~{Cui}, C.~{Cui}, T.~{Sherwood}, and Z.~{Zhang}, ``Efficient
  uncertainty modeling for system design via mixed integer programming,'' in
  \emph{Proc. Intl. Conf. Computer Aided Design}, 2019, pp. 1--8.

\bibitem{cui2018stochastic}
C.~Cui and Z.~Zhang, ``Stochastic collocation with non-{G}aussian correlated
  process variations: Theory, algorithms, and applications,'' \emph{IEEE Trans.
  Compon. Packag. Manuf. Technol.}, vol.~9, no.~7, pp. 1362--1375, 2018.

\bibitem{manfredi2014stochastic}
P.~Manfredi, D.~V. Ginste, D.~De~Zutter, and F.~G. Canavero, ``Stochastic
  modeling of nonlinear circuits via {SPICE}-compatible spectral equivalents,''
  \emph{IEEE Trans. Circuits Syst. I, Reg. Papers}, vol.~61, no.~7, pp.
  2057--2065, 2014.

\bibitem{ahadi2016sparse}
M.~Ahadi and S.~Roy, ``Sparse linear regression ({SPLINER}) approach for
  efficient multidimensional uncertainty quantification of high-speed
  circuits,'' \emph{IEEE Trans. Comput.-Aided Design Integr. Circuits Syst.},
  vol.~35, no.~10, pp. 1640--1652, 2016.

\bibitem{kaintura2018review}
A.~Kaintura, T.~Dhaene, and D.~Spina, ``Review of polynomial chaos-based
  methods for uncertainty quantification in modern integrated circuits,''
  \emph{Electronics}, vol.~7, no.~3, p.~30, 2018.

\bibitem{wang2016re}
F.~Wang, S.~Yin, M.~Jun, X.~Li, T.~Mukherjee, R.~Negi, and L.~Pileggi,
  ``Re-thinking polynomial optimization: efficient programming of
  reconfigurable radio frequency {(RF)} systems by convexification,'' in
  \emph{Proc. Asia South Pac. Design Autom. Conf.}, 2016, pp. 545--550.

\bibitem{tao2018graph}
J.~Tao, Y.~Su, D.~Zhou, X.~Zeng, and X.~Li, ``Graph-constrained sparse
  performance modeling for analog circuit optimization via {SDP} relaxation,''
  \emph{IEEE Trans. Comput.-Aided Design Integr. Circuits Syst.}, vol.~38,
  no.~8, pp. 1385--1398, 2018.

\bibitem{waqas2021performance}
A.~Waqas, P.~Manfredi, and D.~Melati, ``Performance variability analysis of
  photonic circuits with many correlated parameters,'' \emph{J. Light.
  Technol.}, 2021.

\bibitem{waqas2018stochastic}
A.~Waqas, D.~Melati, P.~Manfredi, and A.~Melloni, ``Stochastic process design
  kits for photonic circuits based on polynomial chaos augmented
  macro-modelling,'' \emph{Opt. Express}, vol.~26, no.~5, pp. 5894--5907, 2018.

\bibitem{weng2015uncertainty}
T.-W. Weng, Z.~Zhang, Z.~Su, Y.~Marzouk, A.~Melloni, and L.~Daniel,
  ``Uncertainty quantification of silicon photonic devices with correlated and
  non-{Gaussian} random parameters,'' \emph{Opt. Express}, vol.~23, no.~4, pp.
  4242--4254, 2015.

\bibitem{weng2017stochastic}
T.-W. Weng, D.~Melati, A.~Melloni, and L.~Daniel, ``Stochastic simulation and
  robust design optimization of integrated photonic filters,''
  \emph{Nanophotonics}, vol.~6, no.~1, pp. 299--308, 2017.

\bibitem{cui2020chance}
C.~Cui, K.~Liu, and Z.~Zhang, ``Chance-constrained and yield-aware optimization
  of photonic {IC}s with non-{G}aussian correlated process variations,''
  \emph{IEEE Trans. Comput.-Aided Design Integr. Circuits Syst.}, 2020.

\bibitem{shapiro2014lectures}
A.~Shapiro, D.~Dentcheva, and A.~Ruszczy{\'n}ski, \emph{Lectures on stochastic
  programming: modeling and theory}.\hskip 1em plus 0.5em minus 0.4em\relax
  SIAM, 2014.

\bibitem{mesbah2014stochastic}
A.~Mesbah, S.~Streif, R.~Findeisen, and R.~D. Braatz, ``Stochastic nonlinear
  model predictive control with probabilistic constraints,'' in \emph{Proc. Am.
  Control Conf.}, 2014, pp. 2413--2419.

\bibitem{vitus2015stochastic}
M.~P. Vitus, Z.~Zhou, and C.~J. Tomlin, ``Stochastic control with uncertain
  parameters via chance constrained control,'' \emph{IEEE Trans. Autom.
  Control}, vol.~61, no.~10, pp. 2892--2905, 2015.

\bibitem{wang2017chance}
Z.~Wang, C.~Shen, F.~Liu, X.~Wu, C.-C. Liu, and F.~Gao, ``Chance-constrained
  economic dispatch with {non-Gaussian} correlated wind power uncertainty,''
  \emph{IEEE Trans. Power Syst.}, vol.~32, no.~6, pp. 4880--4893, 2017.

\bibitem{van2016generalized}
B.~P. Van~Parys, P.~J. Goulart, and D.~Kuhn, ``Generalized {Gauss} inequalities
  via semidefinite programming,'' \emph{Math. Program.}, vol. 156, no. 1-2, pp.
  271--302, 2016.

\bibitem{feng2010kinship}
C.~Feng, F.~Dabbene, and C.~M. Lagoa, ``A kinship function approach to robust
  and probabilistic optimization under polynomial uncertainty,'' \emph{IEEE
  Trans. Autom. Control}, vol.~56, no.~7, pp. 1509--1523, 2010.

\bibitem{calafiore2006distributionally}
G.~C. Calafiore and L.~El~Ghaoui, ``On distributionally robust
  chance-constrained linear programs,'' \emph{J. Optim. Theory Appl.}, vol.
  130, no.~1, pp. 1--22, 2006.

\bibitem{henrion2009gloptipoly}
D.~Henrion, J.-B. Lasserre, and J.~L{\"o}fberg, ``Gloptipoly 3: moments,
  optimization and semidefinite programming,'' \emph{Optim. Methods Softw.},
  vol.~24, no. 4-5, pp. 761--779, 2009.

\bibitem{cui2019high}
C.~Cui and Z.~Zhang, ``High-dimensional uncertainty quantification of
  electronic and photonic ic with non-{Gaussian} correlated process
  variations,'' \emph{IEEE Trans. Comput.-Aided Design Integr. Circuits Syst.},
  vol.~39, no.~8, pp. 1649--1661, 2019.

\bibitem{golub1969calculation}
G.~H. Golub and J.~H. Welsch, ``Calculation of {Gauss} quadrature rules,''
  \emph{Math. Comp.}, vol.~23, no. 106, pp. 221--230, 1969.

\bibitem{gerstner1998numerical}
T.~Gerstner and M.~Griebel, ``Numerical integration using sparse grids,''
  \emph{Numer. Algorithms}, vol.~18, no.~3, pp. 209--232, 1998.

\bibitem{lasserre2001global}
J.~B. Lasserre, ``Global optimization with polynomials and the problem of
  moments,'' \emph{SIAM J. Optim.}, vol.~11, no.~3, pp. 796--817, 2001.

\bibitem{lasserre2008semidefinite}
------, ``A semidefinite programming approach to the generalized problem of
  moments,'' \emph{Math. Program.}, vol. 112, no.~1, pp. 65--92, 2008.

\bibitem{nie2014optimality}
J.~Nie, ``Optimality conditions and finite convergence of lasserre’s
  hierarchy,'' \emph{Math. Program.}, vol. 146, no.~1, pp. 97--121, 2014.

\bibitem{YALMIP}
J.~Lofberg, ``{YALMIP} : a toolbox for modeling and optimization in {MATLAB},''
  in \emph{Proc. Intl. Conf. Robot. Autom.}, 2004, pp. 284--289.

\end{thebibliography}
}

\begin{IEEEbiography}
[{\includegraphics[width=1in,height=1.25in,clip,keepaspectratio]{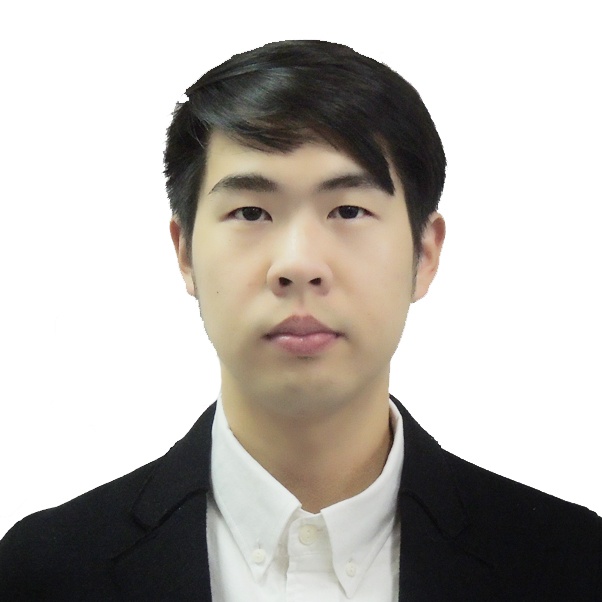}}]{Zichang He} (S’21) received the B.E. degree in Detection, Guidance and Control Technology in 2018 from Northwestern Polytechnical University, Xi'an, China. In 2018 he joined the Department of Electrical and Computer Engineering at University of California, Santa Barbara as a Ph.D. student.

Zichang's research activities are mainly focused on uncertainty quantification and tensor related topics with applications on design automation, machine learning, and quantum computing. He is the recipient of best student paper award in IEEE Electrical Performance of Electronic Packaging and Systems (EPEPS) conference in 2020, the Outstanding Teaching Assistant award in the department of ECE at UCSB in 2020 and 2021, and IEE Excellent in Research Fellowship in 2021.
\end{IEEEbiography}

\begin{IEEEbiography}
 [{\includegraphics[width=1in,height=1.25in,clip,keepaspectratio]{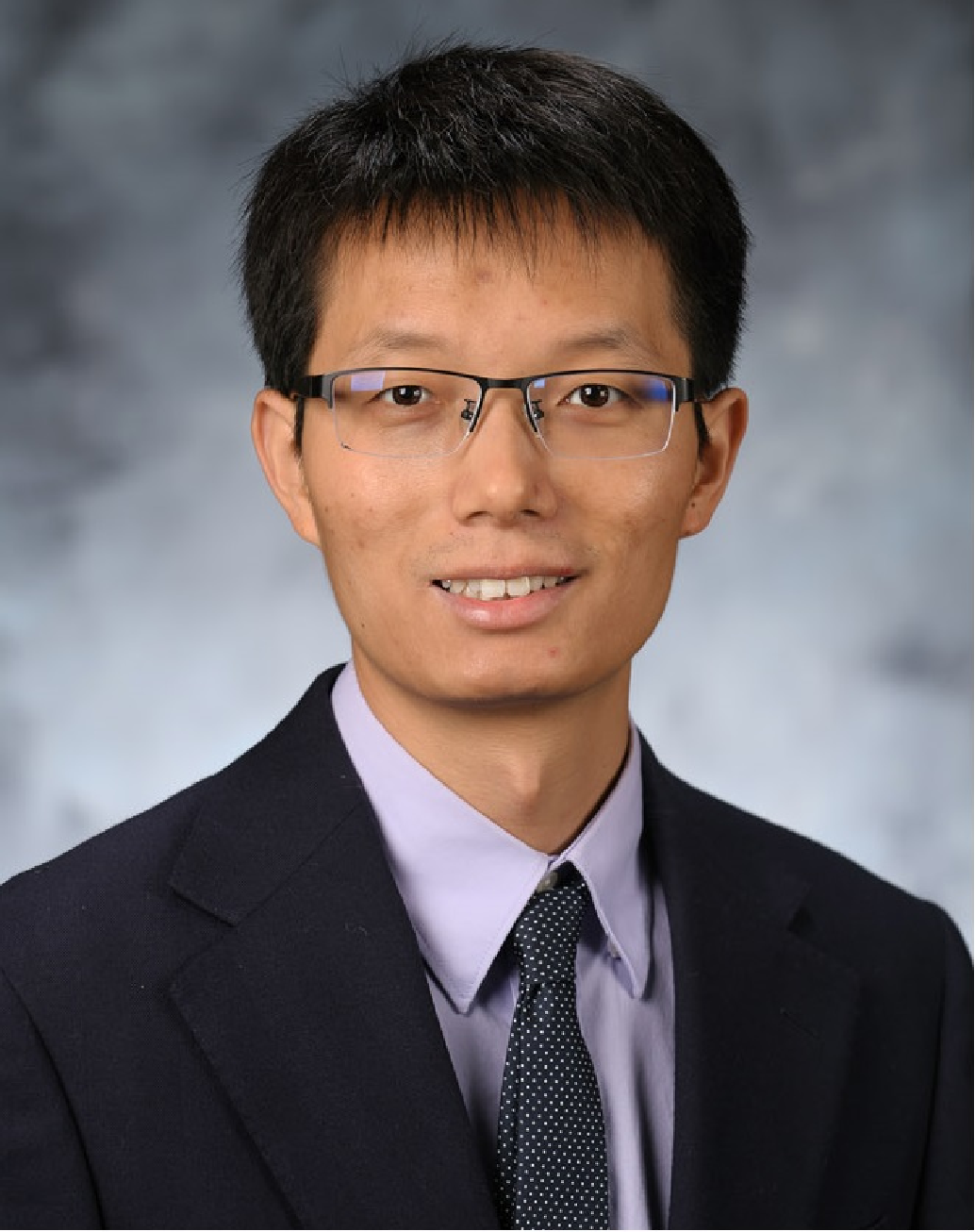}}]{Zheng Zhang} (M'15) received his Ph.D degree in Electrical Engineering and Computer Science from the Massachusetts Institute of Technology (MIT), Cambridge, MA, in 2015. He is an Assistant Professor of Electrical and Computer Engineering with the University of California at Santa Barbara (UCSB), CA. His research interests include uncertainty quantification and tensor computational methods with applications to multi-domain design automation, robust/safe and high-dimensional machine learning and its algorithm/hardware co-design. 

Dr. Zhang received the Best Paper Award of IEEE Transactions on Computer-Aided Design of Integrated Circuits and Systems in 2014, two Best Paper Awards of IEEE Transactions on Components, Packaging and Manufacturing Technology in 2018 and 2020, and three Best Conference Paper Awards (IEEE EPEPS 2018 and 2020, IEEE SPI 2016). His Ph.D. dissertation was recognized by the ACM SIGDA Outstanding Ph.D. Dissertation Award in Electronic Design Automation in 2016, and by the Doctoral Dissertation Seminar Award from the Microsystems Technology Laboratory of MIT in 2015. He received the NSF CAREER Award in 2019, Facebook Research Award in 2020, ACM SIGDA Outstanding New Faculty Award in 2021, and IEEE CEDA Ernest S. Kuh Early Career Award in 2021.
\end{IEEEbiography}

\end{document}